\newtheorem{theorem}{Theorem}[section]
\newtheorem{proposition}[theorem]{Proposition}
\theoremstyle{definition}
\newtheorem{definition}[theorem]{Definition}
\newtheorem{remark}[theorem]{Remark}
\newtheorem{example}[theorem]{Example}
\theoremstyle{remark}
\numberwithin{equation}{section}
\begin{document}

\title[Smooth geometry of bi-quadratic algebras on three generators]{Smooth geometry of bi-quadratic algebras \\ on three generators with PBW basis}


\author{Andr\'es Rubiano}
\address{Universidad Nacional de Colombia - Sede Bogot\'a}
\curraddr{Campus Universitario}
\email{arubianos@unal.edu.co}
\thanks{}


\author{Armando Reyes}
\address{Universidad Nacional de Colombia - Sede Bogot\'a}
\curraddr{Campus Universitario}
\email{mareyesv@unal.edu.co}

\thanks{This work was supported by Faculty of Science, Universidad Nacional de Colombia - Sede Bogot\'a, Colombia [grant number 53880].}

\subjclass[2020]{16S30, 16S32, 16S36, 16S38, 16S99, 16T05, 58B32}
\keywords{Differentially smooth algebra, integrable calculus, skew polynomial ring, generalized Weyl algebra, diskew polynomial ring, bi-quadratic algebra}

\date{}

\dedicatory{Dedicated to Professor Oswaldo Lezama on the Occasion of His 68th Birthday}

\begin{abstract} 

In this paper, we investigate the differential smoothness of bi-quadratic algebras on three generators with PBW basis.

\end{abstract}

\maketitle


\section{Introduction}

The {\em theory of connections} in noncommutative geometry is well-known (for more details, see the beautiful treatments presented by Connes \cite{Connes1994} or Giachetta et al. \cite{Giachettaetal2005}). Briefly, one considers a differential graded algebra $\Omega A = \bigoplus\limits_{n = 0} \Omega^{n} A$ over a $\Bbbk$-algebra $A = \Omega^{0} A$ with $\Bbbk$ a field, and then defines a {\em connection} in a left $A$-module $M$ as a linear map $\nabla^{0} : M \to \Omega^{1} A \otimes_A M$ that satisfies the Leibniz rule $\nabla^0(am) = da \otimes_A m + a\nabla^{0}(m)$ for all $m\in M$ and $a \in A$. As it can be seen, this is a noncommutative definition obtained by a replacement of commutative algebras of functions on a manifold $X$, and their modules of sections of a vector bundle over $X$ (in the classical definition of a connection), by noncommutative algebras and their general one-sided modules. Just as Brzezi{\'n}ski said, \textquotedblleft this captures very well the classical context in which connections appear and brings it successfully to the realm of noncommutative geometry\textquotedblright\ \cite[p. 557]{Brzezinski2008}.

Brzezi{\'n}ski in his paper noted that, on the algebraic side, this definition of connection seems to be only a half of a more general picture. In the first place, a noncommutative connection is defined by using the tensor functor, and as is well-known, this functor has a right adjoint, the {\em hom-functor}, so it is natural to ask whether it is possible to introduce connection-like objects defined with the use of the hom-functor. In the second place, the vector space dual to $M$ is a right $A$-module and a left connection in the above sense does not induce a right connection on the dual of $M$, so having in mind the adjointness properties between tensor and hom functors, the induced map necessarily involves the hom-functor. 

Motivated by all these facts, Brzezi{\'n}ski \cite{Brzezinski2008} showed that there is a natural and potentially rich theory of connnection-like objects defined as maps on the spaces of morphisms of modules. Due to the role of spaces of homomorphisms, these objects are termed {\em hom-connections} (also are called {\em divergences} due to that if $A$ is an algebra of functions on the Euclidean space $\mathbb{R}^n$ and $\Omega^{1}(A)$ is the standard module of one-form, then we obtain the classical divergence of the elementary vector calculus \cite[p. 892]{Brzezinski2011}). As a matter of fact, he proved that hom-connections arise naturally from (strong) connections in {\em noncommutative principal bundles}, and that every left connection on a bimodule (in the sense of Cuntz and Quillen \cite{CuntzQuillen1995}) gives rise to a hom-connection. 
Brzezi{\'n}ski also studied the induction procedure of hom-connections via differentiable bimodules (and hence, via maps of differential graded algebras), and proved that any hom-connection can be extended to {\em higher forms}. He introduced the notion of {\em curvature} and showed that a consecutive application of hom-connections can be expressed in terms of the curvature, which leads to a chain complex associated to a {\em flat} (i.e. curvature-zero) {\em hom-connection} (this chain complex and its homology can be considered as dual complements of the cochain complex associated to a connection and the twisted cohomology, which is crucial in the theory of noncommutative differential fibrations \cite{BeggsBrzezinski2005}).

Two years later, Brzezi{\'n}ski et al. \cite{BrzezinskiElKaoutitLomp2010} presented a construction of {\em differential calculi} which admits hom-connections. This construction is based on the use of {\em twisted multi-derivations}, where the constructed first-order calculus $\Omega^{1}(A)$ is free as a left and right $A$-module; $\Omega^{1}(A)$ should be understood as a module of sections on the cotangent bundle over a manifold represented by $A$, and hence their construction corresponds to parallelizable manifolds or to an algebra of functions on a local chart. One year later, Brzezi{\'n}ski asserted that \textquotedblleft one should expect $\Omega^1(A)$ to be a finitely generated and projective module over $A$ (thus corresponding to sections of a non-trivial vector bundle by the Serre-Swan theorem)\textquotedblright\ \cite[p. 885]{Brzezinski2011}. In his paper, he extended the construction in \cite{BrzezinskiElKaoutitLomp2010} to finitely generated and projective modules.

Related to differential calculi, we have the {\em smoothness of algebras}. Briefly, the study of this smoothness goes back at least to Grothendieck's EGA \cite{Grothendieck1964}. The concept of a {\em formally smooth commutative} ({\em topological}) {\em algebra} introduced by him was extended to the noncommutative setting by Schelter \cite{Schelter1986}. An algebra is {\em formally smooth} if and only if the kernel of the multiplication map is projective as a bimodule. This notion arose as a replacement of a far too general definition based on the finiteness of the global dimension; Cuntz and Quillen \cite{CuntzQuillen1995} called these algebras {\em quasi-free}. Precisely, the notion of smoothness based on the finiteness of this dimension was refined by Stafford and Zhang \cite{StaffordZhang1994}, where a Noetherian algebra is said to be {\em smooth} provided that it has a finite global dimension equal to the homological dimension of all its simple modules. In the homological setting, Van den Bergh \cite{VandenBergh1998} called an algebra {\em homologically smooth} if it admits a finite resolution by finitely generated projective bimodules. The characterization of this kind of smoothness for the noncommutative pillow, the quantum teardrops, and quantum homogeneous spaces was made by Brzezi{\'n}ski \cite{Brzezinski2008, Brzezinski2014} and Kr\"ahmer \cite{Krahmer2012}, respectively.

Brzezi{\'n}ski and Sitarz \cite{BrzezinskiSitarz2017} defined other notion of smoothness of algebras, termed {\em differential smoothness} due to the use of differential graded algebras of a specified dimension that admits a noncommutative version of the Hodge star isomorphism, which considers the existence of a top form in a differential calculus over an algebra together with a string version of the Poincar\'e duality realized as an isomorphism between complexes of differential and integral forms. This new notion of smoothness is different and more constructive than the homological smoothness mentioned above. \textquotedblleft The idea behind the {\em differential smoothness} of algebras is rooted in the observation that a classical smooth orientable manifold, in addition to de Rham complex of differential forms, admits also the complex of {\em integral forms} isomorphic to the de Rham complex \cite[Section 4.5]{Manin1997}. The de Rham differential can be understood as a special left connection, while the boundary operator in the complex of integral forms is an example of a {\em right connection}.\textquotedblright\ \cite[p. 413]{BrzezinskiSitarz2017}

Several authors (e.g. \cite{Brzezinski2015, Brzezinski2016, BrzezinskiElKaoutitLomp2010, BrzezinskiLomp2018, BrzezinskiSitarz2017, DuboisVioletteKernerMadore1990, Karacuha2015, KaracuhaLomp2014, ReyesSarmiento2022}) have characterized the differential smoothness of algebras such as the quantum two - and three - spheres, disc, plane, the noncommutative torus, the coordinate algebras of the quantum group $SU_q(2)$, the noncommutative pillow algebra, the quantum cone algebras, the quantum polynomial algebras, Hopf algebra domains of Gelfand-Kirillov dimension two that are not PI, families of Ore extensions, some 3-dimensional skew polynomial algebras, diffusion algebras in three generators, and noncommutative coordinate algebras of deformations of several examples of classical orbifolds such as the pillow orbifold, singular cones and lens spaces. An interesting fact is that some of these algebras are also homologically smooth in the Van den Bergh's sense.

Motivated by the active research on differential smoothness of algebras, our purpose in this paper is to investigate this smoothness for the {\em bi-quadratic algebras on three generators with PBW basis} defined recently by Bavula \cite{Bavula2023, BavulaAlKhabyah2023}. As we can see from its definition, many of these are {\em generalized Weyl algebras} and {\em diskew polynomial rings} (see Bavula \cite{Bavula2020} for more details). Bi-quadratic algebras with PBW basis are related to noncommutative algebras of polynomial type defined and investigated previously by different authors (e.g. Apel \cite{Apel1988}, Bell et al. \cite{BellGoodearl1988, BellSmith1990}, Bueso et al. \cite{BuesoTorrecillasVerschoren2003}, Hinchcliffe \cite{Hinchcliffe2005}, Levandovskyy \cite{Levandovskyy2005}, Lezama et al. \cite{Fajardoetal2020, GallegoLezama2010, LezamaReyes2014, ReyesSuarez2017}, Li \cite{Li2002}, McConnell and Robson \cite{McConnellRobson2001}, Pyatov et al. \cite{IsaevPyatovRittenberg2001, PyatovTwarock2002}, Redman \cite{RedmanPhDThesis1996, Redman1999}, Rosenberg \cite{Rosenberg1995} and Seiler \cite{SeilerBook2010}). The results presented here contribute to the characterization of the smooth geometry of noncommutative algebras from Brzezi{\'n}ski and Sitarz's point of view.

The article is organized as follows. In Section \ref{PreliminariesDifferentialsmoothnessofbi-quadraticalgebras} we consider the necessary preliminaries on differential smoothness of algebras and bi-quadratic algebras with PBW basis. Section \ref{Differentialandintegralcalculusbi-quadraticalgebras} contains the original results on the paper. We extend Brzezi{\'n}ski's ideas developed for skew polynomial rings of the commutative polynomial ring $\Bbbk[t]$ \cite{Brzezinski2015}  to the setting of bi-quadratic algebras on three generators with PBW basis. More exactly, in Theorem \ref{Firsttheoremsmoothnessbi-quadraticalgebras} we formulate sufficient conditions to guarantee that a bi-quadratic algebra on three generators with PBW basis is differentially smooth, while Theorem \ref{Secondtheoremsmoothnessbi-quadraticalgebras} presents sufficient conditions on the impossibility of this fact. Following Bavula's classification of these algebras presented in his paper \cite{Bavula2023}, we organize our results using different tables to improve its presentation. Finally, in Section \ref{FutureworkDifferentialsmoothnessofbiquadraticalgebras} we say a few words about possible future work related to the topic.

Throughout the paper, $\mathbb{N}$ denotes the set of natural numbers including zero. The word ring means an associative ring with identity not necessarily commutative. $Z(R)$ denotes the center of the ring $R$. All vector spaces and algebras (always associative and with unit) are over a fixed field $\Bbbk$. As usual, the symbols $\mathbb{R}$ and $\mathbb{C}$ denote the fields of real and complex numbers, respectively. 

\section{Preliminaries}\label{PreliminariesDifferentialsmoothnessofbi-quadraticalgebras}

We start by recalling the preliminaries on differential smoothness of algebras and bi-quadratic algebras with PBW basis that are necessary for the rest of the paper.

\subsection{Differential smoothness of algebras}




We follow Brzezi\'nski and Sitarz's presentation on differential smoothness carried out in \cite[Section 2]{BrzezinskiSitarz2017} (c.f. \cite{Brzezinski2008, Brzezinski2014}).

\begin{definition}[{\cite[Section 2.1]{BrzezinskiSitarz2017}}]
A {\em differential graded algebra} is a non-negatively graded algebra $\Omega$ with the product denoted by $\wedge$ together with a degree-one linear map $d:\Omega^{\bullet} \to \Omega^{\bullet +1}$ that satisfies the graded Leibniz rule and is such that $d \circ d = 0$. 
\end{definition}

\begin{definition}[{\cite[Section 2.1]{BrzezinskiSitarz2017}}]
A differential graded algebra $(\Omega, d)$ is a {\em calculus over an algebra} $A$ if $\Omega^0 = A$ and $\Omega^n = A\ dA \wedge dA \wedge \dotsb \wedge dA$ ($dA$ appears $n$-times) for all $n\in \mathbb{N}$. We write $(\Omega A, d)$. By using the Leibniz rule, it follows that $\Omega^n A = dA \wedge dA \wedge \dotsb \wedge dA\ A$. A differential calculus $\Omega A$ is said to be {\em connected} if ${\rm ker}(d\mid_{\Omega_0})=\Bbbk$.
\end{definition}

\begin{definition}[{\cite[Section 2.1]{BrzezinskiSitarz2017}}]
A calculus $(\Omega A, d)$ is said to have {\em dimension} $n$ if $\Omega^n A\neq 0$ and $\Omega^m A = 0$ for all $m > n$. An $n$-dimensional calculus $\Omega A$ {\em admits a volume form} if $\Omega^n A$ is isomorphic to $A$ as a left and right $A$-module. 
\end{definition}

The existence of a right $A$-module isomorphism means that there is a free generator, say $\omega$, of $\Omega^n A$ (as a right $A$-module), i.e. $\omega \in \Omega^n A$, such that all elements of $\Omega^n A$ can be uniquely expressed as $\omega a$ with $a \in A$. If $\omega$ is also a free generator of $\Omega^n A$ as a left $A$-module, this is said to be a {\em volume form} on $\Omega A$.

The right $A$-module isomorphism $\Omega^n A \to A$ corresponding to a volume form $\omega$ is denoted by $\pi_{\omega}$, i.e.
\begin{equation}\label{BrzezinskiSitarz2017(2.1)}
\pi_{\omega} (\omega a) = a, \quad {\rm for\ all}\ a\in A.
\end{equation}

By using that $\Omega^n A$ is also isomorphic to $A$ as a left $A$-module, any free generator $\omega $ induces an algebra endomorphism $\nu_{\omega}$ of $A$ by the formula
\begin{equation}\label{BrzezinskiSitarz2017(2.2)}
    a \omega = \omega \nu_{\omega} (a).
\end{equation}

Note that if $\omega$ is a volume form, then $\nu_{\omega}$ is an algebra automorphism.

Now, we proceed to recall the key ingredients of the {\em integral calculus} on $A$ as dual to its differential calculus. For more details, see Brzezinski et al. \cite{Brzezinski2008, BrzezinskiElKaoutitLomp2010}.

Let $(\Omega A, d)$ be a differential calculus on $A$. The space of $n$-forms $\Omega^n A$ is an $A$-bimodule. Consider $\mathcal{I}_{n}A$ the right dual of $\Omega^{n}A$, the space of all right $A$-linear maps $\Omega^{n}A\rightarrow A$, that is, $\mathcal{I}_{n}A := {\rm Hom}_{A}(\Omega^{n}(A),A)$. Notice that each of the $\mathcal{I}_{n}A$ is an $A$-bimodule with the actions
\begin{align*}
    (a\cdot\phi\cdot b)(\omega)=a\phi(b\omega),\quad {\rm for\ all}\ \phi \in \mathcal{I}_{n}A,\ \omega \in \Omega^{n}A\ {\rm and}\ a,b \in A.
\end{align*}

The direct sum of all the $\mathcal{I}_{n}A$, that is, $\mathcal{I}A = \bigoplus\limits_{n} \mathcal{I}_n A$, is a right $\Omega A$-module with action given by
\begin{align}\label{BrzezinskiSitarz2017(2.3)}
    (\phi\cdot\omega)(\omega')=\phi(\omega\wedge\omega'),\quad {\rm for\ all}\ \phi\in\mathcal{I}_{n + m}A, \ \omega\in \Omega^{n}A \ {\rm and} \ \omega' \in \Omega^{m}A.
\end{align}

\begin{definition}[{\cite[Definition 2.1]{Brzezinski2008}}]
A {\em divergence} (also called {\em hom-connection}) on $A$ is a linear map $\nabla: \mathcal{I}_1 A \to A$ such that
\begin{equation}\label{BrzezinskiSitarz2017(2.4)}
    \nabla(\phi \cdot a) = \nabla(\phi) a + \phi(da), \quad {\rm for\ all}\ \phi \in \mathcal{I}_1 A \ {\rm and} \ a \in A.
\end{equation}  
\end{definition}

Note that a divergence can be extended to the whole of $\mathcal{I}A$, 
\[
\nabla_n: \mathcal{I}_{n+1} A \to \mathcal{I}_{n} A,
\]

by considering
\begin{equation}\label{BrzezinskiSitarz2017(2.5)}
\nabla_n(\phi)(\omega) = \nabla(\phi \cdot \omega) + (-1)^{n+1} \phi(d \omega), \quad {\rm for\ all}\ \phi \in \mathcal{I}_{n+1}(A)\ {\rm and} \ \omega \in \Omega^n A.
\end{equation}

By putting together (\ref{BrzezinskiSitarz2017(2.4)}) and (\ref{BrzezinskiSitarz2017(2.5)}), we get the Leibniz rule 
\begin{equation}
    \nabla_n(\phi \cdot \omega) = \nabla_{m + n}(\phi) \cdot \omega + (-1)^{m + n} \phi \cdot d\omega,
\end{equation}

for all elements $\phi \in \mathcal{I}_{m + n + 1} A$ and $\omega \in \Omega^m A$ \cite[Lemma 3.2]{Brzezinski2008}. In the case $n = 0$, if ${\rm Hom}_A(A, M)$ is canonically identified with $M$, then $\nabla_0$ reduces to the classical Leibniz rule.

\begin{definition}[{\cite[Definition 3.4]{Brzezinski2008}}]
The right $A$-module map 
$$
F = \nabla_0 \circ \nabla_1: {\rm Hom}_A(\Omega^{2} A, M) \to M
$$ is called a {\em curvature} of a hom-connection $(M, \nabla_0)$. $(M, \nabla_0)$ is said to be {\em flat} if its curvature is the zero map, that is, if $\nabla \circ \nabla_1 = 0$. This condition implies that $\nabla_n \circ \nabla_{n+1} = 0$ for all $n\in \mathbb{N}$.
\end{definition}

$\mathcal{I} A$ together with the $\nabla_n$ form a chain complex called the {\em complex of integral forms} over $A$. The cokernel map of $\nabla$, that is, $\Lambda: A \to {\rm Coker} \nabla = A / {\rm Im} \nabla$ is said to be the {\em integral on $A$ associated to} $\mathcal{I}A$.

Given a left $A$-module $X$ with action $a\cdot x$, for all $a\in A,\ x \in X$, and an algebra automorphism $\nu$ of $A$, the notation $^{\nu}X$ stands for $X$ with the $A$-module structure twisted by $\nu$, i.e. with the $A$-action $a\otimes x \mapsto \nu(a)\cdot x $.

The following definition of an \textit{integrable differential calculus} seeks to portray a version of Hodge star isomorphisms between the complex of differential forms of a differentiable manifold and a complex of dual modules of it \cite[p. 112]{Brzezinski2015}. 

\begin{definition}[{\cite[Definition 2.1]{BrzezinskiSitarz2017}}]
An $n$-dimensional differential calculus $(\Omega A, d)$ is said to be {\em integrable} if $(\Omega A, d)$ admits a complex of integral forms $(\mathcal{I}A, \nabla)$ for which there exist an algebra automorphism $\nu$ of $A$ and $A$-bimodule isomorphisms \linebreak $\Theta_k: \Omega^{k} A \to ^{\nu} \mathcal{I}_{n-k}A$, $k = 0, \dotsc, n$, rendering commmutative the following diagram:
\[
{\large{
\begin{tikzcd}
A \arrow{r}{d} \arrow{d}{\Theta_0} & \Omega^{1} A \arrow{d}{\Theta_1} \arrow{r}{d} & \Omega^2 A  \arrow{d}{\Theta_2} \arrow{r}{d} & \dotsb \arrow{r}{d} & \Omega^{n-1} A \arrow{d}{\Theta_{n-1}} \arrow{r}{d} & \Omega^n A  \arrow{d}{\Theta_n} \\ ^{\nu} \mathcal{I}_n A \arrow[swap]{r}{\nabla_{n-1}} & ^{\nu} \mathcal{I}_{n-1} A \arrow[swap]{r}{\nabla_{n-2}} & ^{\nu} \mathcal{I}_{n-2} A \arrow[swap]{r}{\nabla_{n-3}} & \dotsb \arrow[swap]{r}{\nabla_{1}} & ^{\nu} \mathcal{I}_{1} A \arrow[swap]{r}{\nabla} & ^{\nu} A
\end{tikzcd}
}}
\]

The $n$-form $\omega:= \Theta_n^{-1}(1)\in \Omega^n A$ is called an {\em integrating volume form}. 
\end{definition}

The algebra of complex matrices $M_n(\mathbb{C})$ with the $n$-dimensional calculus generated by derivations presented by Dubois-Violette et al. \cite{DuboisViolette1988, DuboisVioletteKernerMadore1990}, the quantum group $SU_q(2)$ with the three-dimensional left covariant calculus developed by Woronowicz \cite{Woronowicz1987} and the quantum standard sphere with the restriction of the above calculus, are examples of algebras admitting integrable calculi. For more details on the subject, see Brzezi\'nski et al. \cite{BrzezinskiElKaoutitLomp2010}. 

The following proposition shows that the integrability of a differential calculus can be defined without explicit reference to integral forms. This allows us to guarantee the integrability by considering the existence of finitely generator elements that allow to determine left and right components of any homogeneous element of $\Omega(A)$.

\begin{proposition}[{\cite[Theorem 2.2]{BrzezinskiSitarz2017}}]\label{integrableequiva} 
Let $(\Omega A, d)$ be an $n$-dimensional differential calculus over an algebra $A$. The following assertions are equivalent:
\begin{enumerate}
    \item [\rm (1)] $(\Omega A, d)$ is an integrable differential calculus.
    
    \item [\rm (2)] There exists an algebra automorphism $\nu$ of $A$ and $A$-bimodule isomorphisms $\Theta_k : \Omega^k A \rightarrow \ ^{\nu}\mathcal{I}_{n-k}A$, $k =0, \ldots, n$, such that, for all $\omega'\in \Omega^k A$ and $\omega''\in \Omega^mA$,
    \begin{align*}
        \Theta_{k+m}(\omega'\wedge\omega'')=(-1)^{(n-1)m}\Theta_k(\omega')\cdot\omega''.
    \end{align*}
    
    \item [\rm (3)] There exists an algebra automorphism $\nu$ of $A$ and an $A$-bimodule map $\vartheta:\Omega^nA\rightarrow\ ^{\nu}A$ such that all left multiplication maps
    \begin{align*}
    \ell_{\vartheta}^{k}:\Omega^k A &\ \rightarrow \mathcal{I}_{n-k}A, \\
    \omega' &\ \mapsto \vartheta\cdot\omega', \quad k = 0, 1, \dotsc, n,
    \end{align*}
    where the actions $\cdot$ are defined by {\rm (}\ref{BrzezinskiSitarz2017(2.3)}{\rm )}, are bijective.
    
    \item [\rm (4)] $(\Omega A, d)$ has a volume form $\omega$ such that all left multiplication maps
    \begin{align*}
        \ell_{\pi_{\omega}}^{k}:\Omega^k A &\ \rightarrow \mathcal{I}_{n-k}A, \\
        \omega' &\ \mapsto \pi_{\omega} \cdot \omega', \quad k=0,1, \dotsc, n-1,
    \end{align*}
    
    where $\pi_{\omega}$ is defined by {\rm (}\ref{BrzezinskiSitarz2017(2.1)}{\rm )}, are bijective.
\end{enumerate}
\end{proposition}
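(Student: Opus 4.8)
The plan is to prove the four assertions equivalent through the implications $(1)\Leftrightarrow(2)\Rightarrow(3)\Leftrightarrow(4)\Rightarrow(1)$, with the single recurring technical tool being the Leibniz rule for the extended divergence,
\[
\nabla_n(\phi\cdot\omega)=\nabla_{m+n}(\phi)\cdot\omega+(-1)^{m+n}\phi\cdot d\omega,
\]
which is exactly the bridge translating the action of $d$ on the top (differential) row of the integrability diagram into the action of the $\nabla_k$ on the bottom (integral) row. Throughout I would fix the canonical identification $\mathcal{I}_0 A=\mathrm{Hom}_A(A,A)\cong A$ given by $\phi\mapsto\phi(1)$, under which the corner map $\Theta_n\colon\Omega^n A\to{}^{\nu}\mathcal{I}_0 A$ of the diagram becomes precisely a bimodule map $\vartheta\colon\Omega^n A\to{}^{\nu}A$. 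This is the object that reappears as $\vartheta$ in $(3)$ and, once shown to be an isomorphism, as $\pi_\omega$ in $(4)$; note that since $n(n-1)$ is always even, the identifications $\vartheta=\Theta_n=\Theta_0(1)$ carry no residual sign.

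For $(1)\Leftrightarrow(2)$ I would read the commutativity $\nabla_{n-k-1}\circ\Theta_k=\Theta_{k+1}\circ d$ as the infinitesimal form of the graded right-$\Omega A$-linearity in $(2)$. Concretely, assuming $(2)$ and differentiating the relation $\Theta_{k+m}(\omega'\wedge\omega'')=(-1)^{(n-1)m}\Theta_k(\omega')\cdot\omega''$ against the Leibniz rule above recovers the commuting squares; conversely, an induction on degree starting from the commutativity rebuilds the multiplicativity relation. The implication $(2)\Rightarrow(3)$ is then a specialization: setting $k=0$ and $\omega'=1$ in $(2)$ yields $\Theta_m=(-1)^{(n-1)m}\ell^{m}_{\vartheta}$ with $\vartheta=\Theta_0(1)$, so each left multiplication map $\ell^{k}_{\vartheta}$ is, up to an invertible sign, the bimodule isomorphism $\Theta_k$, hence bijective.

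For $(3)\Leftrightarrow(4)$ the key observation is that $\ell^{n}_{\vartheta}\colon\Omega^n A\to\mathcal{I}_0 A\cong A$ sends $\omega'\mapsto\vartheta(\omega')$, so $\ell^{n}_{\vartheta}$ is bijective exactly when $\vartheta$ is a right-$A$-module isomorphism $\Omega^n A\cong A$; combined with the fact that $\vartheta$ is a bimodule map into ${}^{\nu}A$, this says precisely that $\omega:=\vartheta^{-1}(1)$ is a volume form with $\pi_\omega=\vartheta$, whence the maps $\ell^{k}_{\vartheta}=\ell^{k}_{\pi_\omega}$ for $k=0,\dots,n-1$ match the two statements verbatim. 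To close the cycle with $(4)\Rightarrow(1)$ I would define $\nu$ from the volume form via $a\omega=\omega\,\nu(a)$, set $\Theta_k:=(-1)^{(n-1)k}\ell^{k}_{\pi_\omega}$, and check directly that these are bimodule isomorphisms rendering the diagram commutative, once more invoking the Leibniz rule to convert $d$ into the $\nabla_k$.

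The main obstacle I anticipate is not any single implication but the careful bookkeeping: keeping the sign factors $(-1)^{(n-1)m}$ consistent across the specializations, and verifying at each stage that the maps built out of $\pi_\omega$ (or $\vartheta$) are genuinely \emph{bimodule} isomorphisms into the \emph{$\nu$-twisted} modules ${}^{\nu}\mathcal{I}_{n-k}A$ rather than merely right-linear bijections. The automorphism $\nu$ must be produced intrinsically from the volume form, and its compatibility with the left actions on both the differential and the integral sides is what actually forces the diagram to commute; establishing that compatibility, rather than the formal diagram chase, is where the real work lies.
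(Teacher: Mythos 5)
First, a point of reference: the paper does not prove this proposition at all --- it is quoted verbatim from Brzezi\'nski and Sitarz \cite[Theorem 2.2]{BrzezinskiSitarz2017} and used as a black box. There is therefore no internal proof to compare your attempt with; what follows measures your outline against what the cited theorem actually requires.

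Your skeleton $(1)\Leftrightarrow(2)\Rightarrow(3)\Leftrightarrow(4)\Rightarrow(1)$ and the key identifications --- $\mathcal{I}_0A\cong A$, $\vartheta=\Theta_0(1)$, $\Theta_m=(-1)^{(n-1)m}\ell^{m}_{\vartheta}$, $\pi_\omega=\vartheta$, the evenness of $n(n-1)$ --- are the right ones and follow the standard route. The genuine gap is in $(4)\Rightarrow(1)$. Conditions (3) and (4) contain no divergence: they are statements purely about $d$, $\pi_\omega$ and bijectivity. Condition (1), by definition, requires exhibiting a complex of integral forms $(\mathcal{I}A,\nabla)$, i.e.\ a linear map $\nabla:\mathcal{I}_1A\to A$ satisfying the hom-connection identity (\ref{BrzezinskiSitarz2017(2.4)}), whose extensions $\nabla_k$ then make the diagram commute. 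You write that you would \textquotedblleft check directly that these are bimodule isomorphisms rendering the diagram commutative, once more invoking the Leibniz rule to convert $d$ into the $\nabla_k$\textquotedblright\ --- but the Leibniz rule (\ref{BrzezinskiSitarz2017(2.5)}) presupposes a divergence that you have not constructed. The real content of this implication is to \emph{define} $\nabla:=\Theta_n\circ d\circ\Theta_{n-1}^{-1}$ (equivalently, to transport $d$ through the bijections $\ell^{k}_{\pi_\omega}$) and then verify, using that $\pi_\omega$ is right $A$-linear and that the $\Theta_k$ are bimodule maps into the $\nu$-twisted modules, that this $\nabla$ satisfies $\nabla(\phi\cdot a)=\nabla(\phi)a+\phi(da)$; only after that does your Leibniz-rule computation make sense, and only then does one also need $\nabla_{n-1}(\pi_\omega)=0$, which is a consequence of the construction rather than a free hypothesis. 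A second, lesser soft spot is $(1)\Rightarrow(2)$: given only bimodule isomorphisms making the diagram commute, the multiplicativity in (2) is not a formal consequence, and your \textquotedblleft induction on degree\textquotedblright\ omits the mechanism that makes it work, namely that $\Omega^{k}A=A\,dA\wedge\dotsb\wedge dA$ is generated over $A$ by wedge products of exact one-forms, so that commutativity with $d$ together with bimodule linearity determines $\Theta_{k+1}$ on $\Omega^{k}A\wedge dA$ in terms of $\Theta_k$. Without naming that generation property, the induction has nothing to run on.
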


A volume form $\omega\in \Omega^nA$ is an {\em integrating form} if and only if it satisfies condition $(4)$ of Proposition \ref{integrableequiva} \cite[Remark 2.3]{BrzezinskiSitarz2017}.

The most interesting cases of differential calculi are those where $\Omega^k A$ are finitely generated and projective right or left (or both) $A$-modules \cite{Brzezinski2011}.

\begin{proposition}\label{BrzezinskiSitarz2017Lemmas2.6and2.7}
\begin{enumerate}
\item [\rm (1)] \cite[Lemma 2.6]{BrzezinskiSitarz2017} Consider $(\Omega A, d)$ an integrable and $n$-dimensional calculus over $A$ with integrating form $\omega$. Then $\Omega^{k} A$ is a finitely generated projective right $A$-module if there exist a finite number of forms $\omega_i \in \Omega^{k} A$ and $\overline{\omega}_i \in \Omega^{n-k} A$ such that, for all $\omega' \in \Omega^{k} A$, we have that 
\begin{equation*}
\omega' = \sum_{i} \omega_i \pi_{\omega} (\overline{\omega}_i \wedge \omega').
\end{equation*}

\item [\rm (2)] \cite[Lemma 2.7]{BrzezinskiSitarz2017} Let $(\Omega A, d)$ be an $n$-dimensional calculus over $A$ admitting a volume form $\omega$. Assume that for all $k = 1, \ldots, n-1$, there exists a finite number of forms $\omega_{i}^{k},\overline{\omega}_{i}^{k} \in \Omega^{k}(A)$ such that for all $\omega'\in \Omega^kA$, we have that
\begin{equation*}
\omega'=\displaystyle\sum_i\omega_{i}^{k}\pi_\omega(\overline{\omega}_{i}^{n-k}\wedge\omega')=\displaystyle\sum_i\nu_{\omega}^{-1}(\pi_\omega(\omega'\wedge\omega_{i}^{n-k}))\overline{\omega}_{i}^{k},
\end{equation*}

where $\pi_{\omega}$ and $\nu_{\omega}$ are defined by {\rm (}\ref{BrzezinskiSitarz2017(2.1)}{\rm )} and {\rm (}\ref{BrzezinskiSitarz2017(2.2)}{\rm )}, respectively. Then $\omega$ is an integral form and all the $\Omega^{k}A$ are finitely generated and projective as left and right $A$-modules.
\end{enumerate}
\end{proposition}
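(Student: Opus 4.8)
The plan is to reduce everything to the \emph{dual basis lemma}: a right (resp.\ left) $A$-module $M$ is finitely generated and projective if and only if there exist finitely many elements $m_i\in M$ and right (resp.\ left) $A$-linear functionals $f_i\colon M\to A$ with $m=\sum_i m_i f_i(m)$ (resp.\ $m=\sum_i f_i(m)m_i$) for all $m\in M$. For part (1) I would view $\Omega^kA$ as a right $A$-module, take the generators to be the $\omega_i$, and set $f_i(\omega')=\pi_\omega(\overline{\omega}_i\wedge\omega')$. First I would check that each $f_i$ is right $A$-linear: since $\wedge$ is right $A$-linear in its last argument and $\pi_\omega$ is a right $A$-module map by (\ref{BrzezinskiSitarz2017(2.1)}), one gets $f_i(\omega' a)=\pi_\omega\bigl((\overline{\omega}_i\wedge\omega')a\bigr)=f_i(\omega')a$. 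The hypothesis $\omega'=\sum_i\omega_i f_i(\omega')$ is then exactly the dual basis identity, so $\Omega^kA$ is finitely generated and projective as a right $A$-module. This step is routine.

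For part (2) a preliminary computation I would record is the twisting rule $\pi_\omega(a\zeta)=\nu_\omega(a)\pi_\omega(\zeta)$ for $a\in A$, $\zeta\in\Omega^nA$: writing $\zeta=\omega c$ and using (\ref{BrzezinskiSitarz2017(2.2)}) gives $a\zeta=\omega\,\nu_\omega(a)c$, whence the claim via (\ref{BrzezinskiSitarz2017(2.1)}). To prove that $\omega$ is an integral form I would invoke the remark following Proposition \ref{integrableequiva}, reducing the task to showing that each left multiplication map $\ell^k_{\pi_\omega}\colon\Omega^kA\to\mathcal{I}_{n-k}A$, $\omega'\mapsto\pi_\omega\cdot\omega'$, is bijective, where $(\pi_\omega\cdot\omega')(\eta)=\pi_\omega(\omega'\wedge\eta)$ by (\ref{BrzezinskiSitarz2017(2.3)}). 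For $1\le k\le n-1$ I would exhibit the explicit candidate inverse
\[
\Psi\colon\mathcal{I}_{n-k}A\to\Omega^kA,\qquad \Psi(\phi)=\sum_i\nu_\omega^{-1}\bigl(\phi(\omega_i^{n-k})\bigr)\,\overline{\omega}_i^{\,k}.
\]
Since $\ell^k_{\pi_\omega}(\omega')(\omega_i^{n-k})=\pi_\omega(\omega'\wedge\omega_i^{n-k})$, the identity $\Psi\circ\ell^k_{\pi_\omega}=\mathrm{id}$ is precisely the second decomposition hypothesis. For the opposite composition, given $\phi\in\mathcal{I}_{n-k}A$ and $\eta\in\Omega^{n-k}A$, I would compute $\ell^k_{\pi_\omega}(\Psi(\phi))(\eta)$ by first applying the twisting rule to cancel $\nu_\omega$ against $\nu_\omega^{-1}$, then using right $A$-linearity of $\phi$ to absorb the scalars $\pi_\omega(\overline{\omega}_i^{\,k}\wedge\eta)$ inside $\phi$, and finally recognizing the first decomposition at the complementary degree, namely $\eta=\sum_i\omega_i^{n-k}\pi_\omega(\overline{\omega}_i^{\,k}\wedge\eta)$, to arrive at $\phi(\eta)$; hence $\ell^k_{\pi_\omega}\circ\Psi=\mathrm{id}$. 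The boundary case $k=0$ follows directly from $\omega$ being a volume form and $\nu_\omega$ an automorphism, so condition (4) of Proposition \ref{integrableequiva} holds and $\omega$ is an integral form.

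Finally, the projectivity conclusions I would obtain by applying the dual basis lemma on each side. The right-module statement is exactly part (1) used at each $k$ with the first decomposition. For the left-module statement I would take generators $\overline{\omega}_i^{\,k}$ and functionals $g_i(\omega')=\nu_\omega^{-1}\bigl(\pi_\omega(\omega'\wedge\omega_i^{n-k})\bigr)$; left $A$-linearity of $g_i$ follows from the twisting rule together with the fact that $\nu_\omega^{-1}$ is an algebra homomorphism, and the second decomposition is the left dual basis identity $\omega'=\sum_i g_i(\omega')\,\overline{\omega}_i^{\,k}$. Thus all $\Omega^kA$ are finitely generated and projective as both left and right $A$-modules.

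The main obstacle is the surjectivity half of the integral-form step, i.e.\ the identity $\ell^k_{\pi_\omega}\circ\Psi=\mathrm{id}$: it is the only place where one must chain together, in the correct order, the twisting rule, the right $A$-linearity of an \emph{arbitrary} functional $\phi$, and the first decomposition applied at the complementary degree $n-k$. Everything else is a direct verification once the dual basis lemma and the twisting rule are in hand.
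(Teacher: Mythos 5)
Your proposal is correct. Note that the paper does not prove this statement at all: it is quoted verbatim as Lemmas 2.6 and 2.7 of Brzezi\'nski--Sitarz \cite{BrzezinskiSitarz2017}, so there is no in-paper argument to compare against. Your reconstruction --- the dual basis lemma for part (1), and for part (2) the twisting rule $\pi_\omega(a\zeta)=\nu_\omega(a)\pi_\omega(\zeta)$ together with the explicit two-sided inverse $\Psi$ of $\ell^k_{\pi_\omega}$ built from the second decomposition, followed by dual bases on each side --- is the standard proof of the cited lemmas and all the steps check out.
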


Brzezi\'nski and Sitarz \cite[p. 421]{BrzezinskiSitarz2017} asserted that to connect the integrability of the differential graded algebra $(\Omega A, d)$ with the algebra $A$, it is necessary to relate the dimension of the differential calculus $\Omega A$ with that of $A$, and since we are dealing with algebras that are deformations of coordinate algebras of affine varieties, the {\em Gelfand-Kirillov dimension} introduced by Gelfand and Kirillov \cite{GelfandKirillov1966, GelfandKirillov1966b} seems to be the best suited. Briefly, given an affine $\Bbbk$-algebra $A$, the {\em Gelfand-Kirillov dimension of} $A$, denoted by ${\rm GKdim}(A)$, is given by
\[
{\rm GKdim}(A) := \underset{n\to \infty}{\rm lim\ sup} \frac{{\rm log}({\rm dim}\ V^{n})}{{\rm log}\ n},
\]

where $V$ is a finite-dimensional subspace of $A$ that generates $A$ as an algebra. This definition is independent of choice of $V$. If $A$ is not affine, then its Gelfand-Kirillov dimension is defined to be the supremum of the Gelfand-Kirillov dimensions of all affine subalgebras of $A$. An affine domain of Gelfand-Kirillov dimension zero is precisely a division ring that is finite-dimensional over its center. In the case of an affine domain of Gelfand-Kirillov dimension one over $\Bbbk$, this is precisely a finite module over its center, and thus polynomial identity. In some sense, this dimensions measures the deviation of the algebra $A$ from finite dimensionality. For more details about this dimension, see the excellent treatment developed by Krause and Lenagan \cite{KrauseLenagan2000}.

After preliminaries above, we arrive to the key notion of this paper.

\begin{definition}[{\cite[Definition 2.4]{BrzezinskiSitarz2017}}]\label{BrzezinskiSitarz2017Definition2.4}
An affine algebra $A$ with integer Gelfand-Kirillov dimension $n$ is said to be {\em differentially smooth} if it admits an $n$-dimensional connected integrable differential calculus $(\Omega A, d)$.
\end{definition}

From Definition \ref{BrzezinskiSitarz2017Definition2.4} it follows that a differentially smooth algebra comes equipped with a well-behaved differential structure and with the precise concept of integration \cite[p. 2414]{BrzezinskiLomp2018}.

\begin{example}
As we said in the Introduction, several examples of noncommutative algebras have been proved to be differentially smooth (e.g. \cite{Brzezinski2015, BrzezinskiElKaoutitLomp2010, BrzezinskiLomp2018, BrzezinskiSitarz2017, Karacuha2015, KaracuhaLomp2014, ReyesSarmiento2022}). For instance, the polynomial algebra $\mathbb{C}[x_1, \dotsc, x_n]$ has the Gelfand-Kirillov dimension $n$ and the usual exterior algebra is an $n$-dimensional integrable calculus, whence $\mathbb{C}[x_1, \dotsc, x_n]$ is differentially smooth. From \cite{BrzezinskiElKaoutitLomp2010} it follows that the coordinate algebras of the quantum group $SU_q(2)$, the standard quantum Podle\'s and the quantum Manin plane are differentially smooth.
\end{example}

There are examples of algebras that are not differentially smooth. To illustrate this fact, take the algebra $A = \mathbb{C}[x, y] / \langle xy \rangle$. A proof by contradiction shows that for this algebra there are no one-dimensional connected integrable calculi over $A$, so it cannot be differentially smooth \cite[Example 2.5]{BrzezinskiSitarz2017}.

\subsection{Bi-quadratic algebras with PBW basis}\label{BiquadraticalgebrasPBWbasis}

For a natural number $n\ge 2$, a family $M = (m_{ij})_{i > j}$ of elements $m_{ij}$ belonging to $R$ ($1\le j < i \le n$) is called a {\em lower triangular half-matrix} with coefficients in $R$. The set of all such matrices is denoted by $L_n(R)$.

\begin{definition}[{\cite[Section 1]{Bavula2023}}]
If $\sigma = (\sigma_1, \dotsc, \sigma_n)$ is an $n$-tuple of commuting endomorphisms of $R$, $\delta = (\delta_1, \dotsc, \delta_n)$ is an $n$-tuple of $\sigma$-endomorphisms of $R$ (that is, $\delta_i$ is a $\sigma_i$-derivation of $R$ for $i=1,\dotsc, n$), $Q = (q_{ij})\in L_n(Z(R))$, $\mathbb{A}:= (a_{ij, k})$ where $a_{ij, k}\in R$, $1\le j < i \le n$ and $k = 1,\dotsc, n$, and $\mathbb{B}:= (b_{ij})\in L_n(R)$, the {\em skew bi-quadratic algebra} ({\em SBQA}) $A = R[x_1,\dotsc, x_n;\sigma, \delta, Q, \mathbb{A}, \mathbb{B}]$ is a ring generated by the ring $R$ and elements $x_1, \dotsc, x_n$ subject to the defining relations
\begin{align}
    x_ir = &\ \sigma_i(r)x_i + \delta_i(r),\quad {\rm for}\ i = 1, \dotsc, n,\ {\rm and\ every}\ r\in R, \label{Bavula2023(1)} \\
    x_ix_j - q_{ij}x_jx_i = &\ \sum_{k=1}^{n} a_{ij, k}x_k + b_{ij},\quad {\rm for\ all}\ j < i.\label{Bavula2023(2)}
\end{align}

If $\sigma_i = {\rm id}_R$ and $\delta_i = 0$ for $i = 1,\dotsc, n$, the ring $A$ is called the {\em bi-quadratic algebra} ({\em BQA}) and is denoted by $A = R[x_1, \dotsc, x_n; Q, \mathbb{A}, \mathbb{B}]$. $A$ has {\em PBW basis} if $A = \bigoplus\limits_{\alpha \in \mathbb{N}^{n}} Rx^{\alpha}$ where $x^{\alpha} = x_1^{\alpha_1}\dotsb x_n^{\alpha_n}$.
\end{definition}

On the set $W_n$ of all words in the alphabet $\{x_1, \dotsc, x_n\}$, Bavula considered the {\em degree-by-lexicographic ordering} where $x_1 < \dotsb < x_n$. More exactly, $x_{i_1} \dotsb x_{i_s}$ if either $s < t$ or $s = t,\ i_1 = j_1, \dotsc, i_k = j_k$ and $i_{k+1} < j_{k+1}$ for some $k$ such that $1 \le k < s$. Hence, if $A = R[x_1, \dotsc, x_n; Q, \mathbb{A}, \mathbb{B}]$ is a quadratic algebra where $n\ge 3$, then for each triple $i, j, k \in \{1, \dotsc, n\}$ such that $ i < j < k$, there are exactly two different ways to simplify the product $x_kx_jx_i$ with respect to this order given by
\begin{align*}
    x_k x_j x_i = &\ q_{kj} q_{ki} q_{ji} x_i x_j x_k + \sum_{|\alpha| \le 2} c_{k, j, i, \alpha} x^{\alpha}, \\
    x_k x_j x_i = &\ q_{kj} q_{ki} q_{ji} x_i x_j x_k + \sum_{|\alpha| \le 2} c'_{k, j, i, \alpha} x^{\alpha},
\end{align*}

where in the first (resp. second) equality we start to simplify the product with the relation $x_k x_j = q_{kj} x_j x_k + \dotsb$ (resp. $x_j x_i = q_{ji} x_i x_j + \dotsb$) \cite[p. 696]{Bavula2023}. Thus, the defining relations (\ref{Bavula2023(2)}) are consistent (i.e. $A\neq 0$) and $A = \bigoplus\limits_{\alpha \in \mathbb{N}^n} Rx^{\alpha}$ if and only if for all triples $i, j, k \in \{1, \dotsc, n\}$ such that $i < j < k$, we have that $ c_{k, j, i, \alpha} = c'_{k, j, i, \alpha}$. If this is the case, then for all $\sigma \in S_n$ ($S_n$ denotes the {\em symmetric group of order $n$}) we have that $A = \bigoplus\limits_{\alpha \in \mathbb{N}^n} Rx_{\sigma}^{\alpha}$ where $x_{\sigma}^{\alpha} = x_{\sigma(1)}^{\alpha_1} \dotsb x_{\sigma(n)}^{\alpha_n}$ and $\alpha = (\alpha_1, \dotsc, \alpha_n)$ \cite[Theorem 1.1]{Bavula2023}.

\begin{remark}\label{BiquadraticalgebrasPBWbasisTwogenerators}
The classification of bi-quadratic algebras on two generators over a field $\Bbbk$ is as follows: if $q\in \Bbbk^{*}$ and $a, b, c \in \Bbbk$, then the algebra
\[
A = \Bbbk[x_1, x_2; q, a, b, c] := \Bbbk\{ x_1, x_2\} / \langle x_2 x_1 - qx_1x_2 - ax_1 - bx_2 - c\rangle
\]

is a bi-quadratic algebra on two generators \cite[p. 704]{Bavula2023}. The algebra $A = \Bbbk[x_1][x_2; \sigma, \delta]$ is a {\em skew polynomial algebra} or {\em Ore extension} (introduced by Ore \cite{Ore1931, Ore1933}) of $\Bbbk[x_1]$ where $\sigma(x_1) = qx_1 + b$ and $\delta(x_1) = ax_1 + c$. In this way, the algebra $A$ is a Noetherian domain with PBW basis, $A = \bigoplus\limits_{i, j \in \mathbb{N}} \Bbbk x_1^{i} x_2^{j}$ and the scalars $a, b, c$ are arbitrary.

Up to isomorphism, there are only five bi-quadratic algebras on two generators \cite[Theorem 2.1]{Bavula2023}: 
    \begin{itemize}
        \item The {\em polynomial algebra} $\Bbbk[x_1, x_2]$;
        
        \item The {\em Weyl algebra} $A_1(\Bbbk) = \Bbbk\{x_1, x_2\} / \langle x_1x_2 - x_2x_1 - 1\rangle$;
        
        \item The {\em universal enveloping algebra of the Lie algebra} $\mathfrak{n}_2 = \langle x_1, x_2\mid [x_2, x_1] = x_1\rangle$, that is, $U(\mathfrak{n}_2) = \Bbbk\{x_1, x_2\} / \langle x_2x_1 - x_1x_2 - x_1\rangle$;
        
        \item The {\em quantum plane} ({\em Manin's plane}) $\mathcal{O}_q(\Bbbk) = \Bbbk \{x_1, x_2\} / \langle x_2 x_1 - qx_1 x_2\rangle$, where $q\in \Bbbk\ \backslash\ \{0,1\}$;
        
        \item The {\em quantum Weyl algebra} $A_1(q) = \Bbbk \{x_1, x_2\} / \langle x_2x_1 - qx_1x_2 - 1\rangle$, where $q\in \Bbbk\ \backslash\ \{0,1\}$.
    \end{itemize}
\end{remark}

Proposition \ref{Bavula2023Theorem1.3} gives necessary and sufficient conditions for the bi-quadratic algebras on three-generators have a PBW basis. As we will see in Section \ref{DSBiquadraticalgebrasThreegenerators}, this is key to characterize the differential smoothness of these algebras.

\begin{proposition}[{\cite[Theorem 1.3]{Bavula2023}}]\label{Bavula2023Theorem1.3}
Let $A = \Bbbk[x_1, x_2, x_3;Q, \mathbb{A}, \mathbb{B}]$ be a bi-quadratic algebra where $Q = (q_1, q_2, q_3) \in \Bbbk^{* 3}$,
\[
\mathbb{A} = \begin{bmatrix}
    a & b & c \\ \alpha & \beta & \gamma \\ \lambda & \mu & \nu
\end{bmatrix}\quad {\rm and}\quad \mathbb{B} = \begin{bmatrix} b_1 \\ b_2 \\ b_3 \end{bmatrix}.
\]

The algebra $A$ is generated over $\Bbbk$ by the elements $x_1, x_2$ and $x_3$ subject to the defining relations
\begin{align}
    x_2 x_1 - q_1x_1x_2 = &\ ax_1 + bx_2 + cx_3 + b_1, \label{Bavula2023(8)} \\
    x_3 x_1 - q_2 x_1 x_3 = &\ \alpha x_1 + \beta x_2 + \gamma x_3 + b_2, \label{Bavula2023(9)} \\
    x_3 x_2 - q_3x_2 x_3 = &\ \lambda x_1 + \mu x_2 + \nu x_3 + b_3. \label{Bavula2023(10)}
\end{align}

The relations {\rm(}\ref{Bavula2023(8)}{\rm )}, {\rm (}\ref{Bavula2023(9)}{\rm )} and {\rm (}\ref{Bavula2023(10)}{\rm )} are consistent and $A = \bigoplus\limits_{\alpha \in \mathbb{N}^3} \Bbbk x^{\alpha}$ where $x^{\alpha} = x_1^{\alpha_1} x_2^{\alpha_2}x_3^{\alpha_3}$ if and only if the following conditions hold:
\begin{align}
&\ (1 - q_3)\alpha =  (1 - q_2)\mu, \label{Bavula2023(11)} \\
&\ (1 - q_3) a = (1 - q_1)\nu, \label{Bavula2023(12)} \\
&\ (1 - q_2) b = (1 - q_1)\gamma \label{Bavula2023(13)} \\
&\ (1 - q_1 q_2) \lambda = 0, \label{Bavula2023(14)} \\
&\ (q_1 - q_3) \beta = 0, \label{Bavula2023(15)} \\
&\ (1 - q_2 q_3)c = 0, \label{Bavula2023(16)} \\
&\ ((1-q_3)\alpha -\mu)a+(b+q_1\gamma)\lambda-\nu\alpha+(q_1q_2-1)b_3=0, \label{Bavula2023(17)} \\
&\ (a-\nu)\beta + q_1\gamma\mu -q_3\alpha b+(q_1-q_3)b_2=0, \label{Bavula2023(18)}\\
&\ (a+(q_1-1)\nu)\gamma+b\nu-(\mu+q_3\alpha)c+(1-q_2q_3)b_1=0, \label{Bavula2023(19)}\\
&\ -(\mu + q_3\alpha)b_1+(a-\nu)b_2+(b+q_1\gamma)b_3=0. \label{Bavula2023(20)}
\end{align}

Furthermore, if $A = \bigoplus\limits_{\alpha \in \mathbb{N}^3} \Bbbk x^{\alpha}$ where $x^{\alpha} = x_1^{\alpha_1} x_2^{\alpha_2} x_3^{\alpha_3}$ then $A = \bigoplus\limits_{\alpha \in \mathbb{N}^3} \Bbbk x^{\alpha}_{\sigma}$ for all $\sigma \in S_3$ where $x^{\alpha}_{\sigma} = x^{\alpha_1}_{\sigma(1)} x^{\alpha_2}_{\sigma(2)} x^{\alpha_3}_{\sigma(3)}$.
\end{proposition}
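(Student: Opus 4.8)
The plan is to reduce the statement to a single \emph{overlap ambiguity} and then resolve it by direct computation. Since we work with three generators ordered $x_1 < x_2 < x_3$, the relations (\ref{Bavula2023(8)})--(\ref{Bavula2023(10)}) furnish rewriting rules that move each product $x_j x_i$ with $j > i$ into the normal form $\bigoplus_{\alpha} \Bbbk x_1^{\alpha_1} x_2^{\alpha_2} x_3^{\alpha_3}$, namely $x_2 x_1 \mapsto q_1 x_1 x_2 + a x_1 + b x_2 + c x_3 + b_1$ and analogously for $x_3 x_1$ and $x_3 x_2$. By the consistency criterion recalled just before the statement, the relations are consistent and $A = \bigoplus_{\alpha} \Bbbk x^{\alpha}$ precisely when, for every triple $i < j < k$, the two reductions of $x_k x_j x_i$ agree. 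For $n = 3$ there is a unique such triple $(i,j,k) = (1,2,3)$, so the whole problem collapses to checking the single word $x_3 x_2 x_1$.

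First I would carry out the two reductions of $x_3 x_2 x_1$. In the first I begin with the leftmost pair, applying the rule for $x_3 x_2$, and then repeatedly reduce the resulting words $x_2 x_3 x_1$, $x_2 x_1$, $x_3 x_1$, and so on, until no descent remains; in the second I begin with the rightmost pair, applying the rule for $x_2 x_1$, and reduce $x_3 x_1 x_2$, $x_3 x_1$, $x_3 x_2$, and so on, analogously. Both normal forms share the leading term $q_1 q_2 q_3\, x_1 x_2 x_3$, which cancels in the comparison, so the constraints live entirely in the coefficients of the monomials $x^{\alpha}$ with $|\alpha| \le 2$.

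Then I would equate the two families of coefficients monomial by monomial. The six quadratic monomials yield the six conditions (\ref{Bavula2023(11)})--(\ref{Bavula2023(16)}): the coefficients of $x_1 x_2$, $x_1 x_3$ and $x_2 x_3$ give (\ref{Bavula2023(11)}), (\ref{Bavula2023(12)}) and (\ref{Bavula2023(13)}) after cancelling the respective nonzero factors $q_1$, $q_2$, $q_3$ (here one uses crucially that each $q_i \in \Bbbk^{*}$), while the coefficients of $x_1^2$, $x_2^2$ and $x_3^2$ give directly (\ref{Bavula2023(14)}), (\ref{Bavula2023(15)}) and (\ref{Bavula2023(16)}). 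The three linear monomials $x_1$, $x_2$, $x_3$ produce the quadratic relations (\ref{Bavula2023(17)}), (\ref{Bavula2023(18)}) and (\ref{Bavula2023(19)}), and the constant term produces (\ref{Bavula2023(20)}); in each of these last four the two expressions differ by an overall sign from the displayed form, so one normalizes accordingly. This establishes the equivalence. Finally, the concluding assertion---that a PBW basis in the order $x_1, x_2, x_3$ forces one in every order $\sigma \in S_3$---is not re-proved here but is exactly the content of \cite[Theorem 1.1]{Bavula2023} recalled above, since once all overlap ambiguities resolve the normal-form property is symmetric in the generators.

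The main obstacle is purely computational bookkeeping. Each of the two reductions branches into many intermediate words, and because the structure constants $a, b, c, \alpha, \dotsc$ reintroduce generators that must themselves be re-reduced, the process is recursive and the number of terms proliferates. Tracking every contribution to each of the ten target monomials---and matching the exact $q$-weightings and signs against (\ref{Bavula2023(11)})--(\ref{Bavula2023(20)})---is where errors are most likely and is the genuinely delicate part of the argument.
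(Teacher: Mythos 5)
Your proposal is correct and follows exactly the route the paper indicates: Proposition \ref{Bavula2023Theorem1.3} is quoted from Bavula without proof, but the consistency criterion recalled just before it (two reductions of the unique overlap $x_3x_2x_1$ must agree coefficient-by-coefficient) is precisely the intended argument, and your monomial-to-condition dictionary checks out --- $x_1x_2, x_1x_3, x_2x_3$ give (\ref{Bavula2023(11)})--(\ref{Bavula2023(13)}) after cancelling $q_1, q_2, q_3$, the squares give (\ref{Bavula2023(14)})--(\ref{Bavula2023(16)}), the linear and constant terms give (\ref{Bavula2023(17)})--(\ref{Bavula2023(20)}) up to an overall sign, and the final $S_3$-claim is legitimately deferred to \cite[Theorem 1.1]{Bavula2023}. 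No gaps.
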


Related noncommutative algebras on three generators and PBW basis have been defined by Bell and Smith \cite{BellSmith1990} (see also Rosenberg \cite[Theorem C4.3.1]{Rosenberg1995}), and Pyatov et al. \cite{IsaevPyatovRittenberg2001, PyatovTwarock2002}.

From now on, the expression \textquotedblleft bi-quadratic algebra\textquotedblright\ means \textquotedblleft bi-quadratic algebra with PBW basis\textquotedblright.

\begin{example}
Let us see some examples of bi-quadratic algebras on three generators.
\begin{enumerate}
    \item [\rm (a)] The universal enveloping algebra of any 3-dimensional Lie algebra.
    
    \item [\rm (b)] The 3-{\em dimensional quantum space} $\mathbb{A}_{q_1, q_2, q_3}^{3} := \Bbbk[x_1, x_2, x_3; Q, \mathbb{A} = 0, \mathbb{B} = 0]$.
    
    \item [\rm (c)] Following Havli\v{c}ek et al. \cite[p. 79]{HavlicekKlimykPosta2000}, the $ \Bbbk$-algebra $U_q'(\mathfrak{so}_3)$ is generated by the indeterminates $I_1, I_2$, and $I_3$ subject to the relations given by
\begin{align*}
    I_2I_1 - qI_1I_2 = &\ -q^{\frac{1}{2}}I_3, \\
    I_3I_1 - q^{-1}I_1I_3 = &\ q^{-\frac{1}{2}}I_2, \quad {\rm and} \\
    I_3I_2 - qI_2I_3 = &\ -q^{\frac{1}{2}}I_1, \quad {\rm for}\ q\in \Bbbk\ \backslash \ \{0, \pm 1\}. 
\end{align*}

\item [\rm (d)] Zhedanov \cite[Section 1]{Zhedanov1991} introduced the {\em Askey-Wilson algebra} $AW(3)$ as the $\mathbb{R}$-algebra generated by three operators $K_0, K_1$, and $K_2$, that satisfy the commutation relations 
\begin{align*}
[K_0, K_1]_{\omega} = &\ K_2, \\
[K_2, K_0]_{\omega} = &\ BK_0 + C_1K_1 + D_1, \quad {\rm and} \\
[K_1, K_2]_{\omega} = &\ BK_1 + C_0K_0 + D_0, 
\end{align*}

where $B, C_0, C_1, D_0$, and $D_1$ are elements of $\mathbb{R}$ that represent the structure constants of the algebra, and the $q$-commutator $[ - , -]_{\omega}$ is given by $[\square, \triangle]_{\omega}:= e^{\omega}\square \triangle - e^{- \omega}\triangle \square$, where $\omega\in \mathbb{R}$. Notice that in the limit $\omega \to 0$, the algebra AW(3) becomes an ordinary Lie algebra with three generators ($D_0$ and $D_1$ are included among the structure constants of the algebra in order to take into account algebras of Heisenberg-Weyl type). The relations defining the algebra can be written as 
\begin{align*}
    e^{\omega}K_0K_1 - e^{-\omega}K_1K_0 = &\ K_2,\\
    e^{\omega} K_2K_0 - e^{-\omega}K_0 K_2 = &\ BK_0 + C_1K_1 + D_1, \quad {\rm and} \\
    e^{\omega}K_1K_2 - e^{-\omega}K_2K_1 = &\ BK_1 + C_0K_0 + D_0.
\end{align*}
\end{enumerate}
\end{example}

With the aim of classifying bi-quadratic algebras on three generators, Bavula \cite{Bavula2023, BavulaAlKhabyah2023} considered $Q = (q_1, q_2, q_3) \in \Bbbk^{* 3}$ in Proposition \ref{Bavula2023Theorem1.3} into the following four cases:
\begin{itemize}
    \item $q_1 = q_2 = q_3 = 1$ (Lie type); 
    \item $q_1 \neq 1,\ q_2 = q_3 = 1$; 
    \item $q_1\neq 1,\ q_2\neq 1,\ q_3 = 1$; and
    \item $q_1\neq 1,\ q_2\neq 1,\ q_3\neq 1$.
\end{itemize}

We will also consider these cases in the characterization of the differential smoothness of these algebras presented in the next section. As it can be seen from Bavula's papers, there are exactly $44$ types (up to isomorphism and considering $\sqrt{\Bbbk}\subseteq \Bbbk$ and $\sqrt[3]{\Bbbk}\subseteq \Bbbk$) of non-isomorphic algebras of bi-quadratic algebras on three generators (see Tables \ref{FirsttableDSBiquadraticalgebras3}, \ref{SecondtableDSBiquadraticalgebras3}, \ref{ThirdtableDSBiquadraticalgebras3}, \ref{FourthtableDSBiquadraticalgebras3}, \ref{FifthtableDSBiquadraticalgebras3}, \ref{SixthtableDSBiquadraticalgebras3}, \ref{SeventhtableDSBiquadraticalgebras3} and \ref{EighttableDSBiquadraticalgebras3} for the details of each algebra). 

\section{Differential and integral calculus}\label{Differentialandintegralcalculusbi-quadraticalgebras}

In this section we investigate the differential smoothness of bi-quadratic algebras on three generators. Before, we say a few words about the smoothness of the bi-quadratic algebras on two generators.

\subsection{Bi-quadratic algebras on two generators}\label{DSBiquadraticalgebrasTwogenerators}

Brzezi{\'n}ski \cite{Brzezinski2015} characterized the differential smoothness of skew polynomial rings of the form $\Bbbk[t][x; \sigma_{q, r}, \delta_{p(t)}]$ where $\sigma_{q, r}(t) = qt + r$, with $q, r \in \Bbbk,\ q\neq 0$, and the $\sigma_{q, r}-$derivation $\delta_{p(t)}$ is defined as
\[
\delta_{p(t)} (f(t)) = \frac{f(\sigma_{q, r}(t)) - f(t)}{\sigma_{q, r}(t) - t} p(t),
\]

for an element $p(t) \in \Bbbk[t]$. $\delta_{p(t)}(f(t))$ is a suitable limit when $q = 1$ and $r = 0$, that is, when $\sigma_{q, r}$ is the identity map of $\Bbbk[t]$.

For the maps
\begin{equation}\label{Brzezinski2015(3.4)}
\nu_t(t) = t,\quad \nu_t(x) = qx + p'(t)\quad {\rm and}\quad \nu_x(t) = \sigma_{q, r}^{-1}(t),\quad \nu_x(x) = x,
\end{equation}

where $p'(t)$ is the classical $t$-derivative of $p(t)$, Brzezi{\'n}ski \cite[Lemma 3.1]{Brzezinski2015} showed that all of them simultaneously extend to algebra automorphisms $\nu_t$ and $\nu_x$ of $\Bbbk[t][x; \sigma_{q, r}, \delta_{p(t)}]$ only in the following three cases:
    \begin{enumerate}
        \item [\rm (a)] $q = 1, r = 0$ with no restriction on $p(t)$;
        
        \item [\rm (b)] $q = 1, r\neq 0$ and $p(t) = c$, $c\in \Bbbk$;
        
        \item [\rm (c)] $q\neq 1, p(t) = c\left( t + \frac{r}{q-1} \right)$, $c\in \Bbbk$ with no restriction on $r$.
    \end{enumerate}
    
In any of the cases {\rm (a) - (c)} we have that $\nu_x \circ \nu_t = \nu_t \circ \nu_x$. If the Ore extension $\Bbbk[t][x; \sigma_{q, r}, \delta_{p(t)}]$ satisfies one of these three conditions, then it is differentially smooth \cite[Proposition 3.3]{Brzezinski2015}.

As we saw in Remark \ref{BiquadraticalgebrasPBWbasisTwogenerators}, up to isomorphism, there are only five bi-quadratic algebras on two generators, so that having in mind Brzezi{\'n}ski's result on the differential smoothness of $\Bbbk[t][x; \sigma_{q, r}, \delta_{p(t)}]$, we get that the algebras $\Bbbk[x_1,x_2],\ A_1(\Bbbk)$ and $U(\mathfrak{n}_2)$ belong to the case (a), while $\mathcal{O}_q(\Bbbk)$ belongs to the case (c), whence these four algebras are differentially smooth. With respect to the quantum Weyl algebra $A_1(q)$, this cannot be classified by using this kind of skew polynomial algebras so that its differential smoothness should be investigated by considering other and alternative approach.

\subsection{Bi-quadratic algebras on three generators}\label{DSBiquadraticalgebrasThreegenerators}

Throughout this section, $A$ denotes a bi-quadratic algebra on three generators $x_1$, $x_2$ and $x_3$ subject to the relations {\rm (}\ref{Bavula2023(8)}{\rm )}, {\rm (}\ref{Bavula2023(9)}{\rm )} and {\rm (}\ref{Bavula2023(10)}{\rm )} in Proposition \ref{Bavula2023Theorem1.3}. Since these are a subclass of the {\em skew PBW extensions} introduced by Gallego and Lezama \cite{GallegoLezama2010}, it follows from \cite[Theorems 14 and 18]{Reyes2013} that its Gelfand-Kirillov dimension is three. This fact is key in Theorems \ref{Firsttheoremsmoothnessbi-quadraticalgebras} and \ref{Secondtheoremsmoothnessbi-quadraticalgebras}.

The following theorem is the first important result of the paper. We extend Brzezi\'nski's ideas \cite{Brzezinski2015} mentioned in Section \ref{DSBiquadraticalgebrasTwogenerators}.

\begin{theorem}\label{Firsttheoremsmoothnessbi-quadraticalgebras}
If the conditions
\begin{align}
    c = \beta = \lambda = &\ 0, \\
    b_1(q_1-1)-ab = &\ 0, \\
    b_2(q_2-1)-\alpha\gamma = &\ 0, \\
    b_3(q_3-1) - \mu \nu = &\ 0, \quad {\rm and} \\
    \mu a = &\ 0
\end{align}

hold, then $A$ is differentially smooth.
\end{theorem}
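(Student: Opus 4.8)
The plan is to exhibit, under the stated conditions, a three-dimensional connected integrable differential calculus $(\Omega A, d)$ over $A$. Since ${\rm GKdim}(A) = 3$ as recorded above, Definition \ref{BrzezinskiSitarz2017Definition2.4} then yields differential smoothness immediately. The construction follows the pattern Brzezi\'nski used for skew polynomial rings: first build a first-order calculus that is free of rank three, then pass to its exterior algebra to obtain a top form in degree three, and finally verify integrability through the criterion of Proposition \ref{integrableequiva}.

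First I would declare $\Omega^1 A$ to be the free right $A$-module on symbols $dx_1, dx_2, dx_3$ and set $d(x_i) = dx_i$, extended to all of $A$ by the Leibniz rule. The left $A$-module structure, equivalently the commutation rules expressing each $x_i\,dx_j$ in terms of the $dx_k\,a$, is forced by applying $d$ to the three defining relations. With $c = \beta = \lambda = 0$ the relations (\ref{Bavula2023(8)}), (\ref{Bavula2023(9)}) and (\ref{Bavula2023(10)}) involve only the pairs $(x_1,x_2)$, $(x_1,x_3)$ and $(x_2,x_3)$ respectively, so each relation contributes an independent constraint of two-generator type. Solving the resulting linear system produces, for each generator, an algebra endomorphism $\nu_j$ of $A$ together with a twisting derivation, so that the bimodule $\Omega^1 A$ is a twisted multi-derivation calculus in the sense of \cite{BrzezinskiElKaoutitLomp2010}. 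The conditions $b_1(q_1-1) = ab$, $b_2(q_2-1) = \alpha\gamma$ and $b_3(q_3-1) = \mu\nu$ are exactly what make each $\nu_j$ extend to a genuine algebra automorphism (the three-generator analogue of Brzezi\'nski's case (c) in Section \ref{DSBiquadraticalgebrasTwogenerators}), while $\mu a = 0$ forces the $\nu_j$ to commute pairwise.

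Next I would form the exterior algebra $\Omega A = \bigoplus_{k=0}^{3}\Omega^k A$ with $\Omega^k A = \bigwedge^k \Omega^1 A$, so that $\Omega^3 A$ is free of rank one on $\omega := dx_1 \wedge dx_2 \wedge dx_3$ and $\Omega^m A = 0$ for $m > 3$; the commutation of the $\nu_j$ guarantees that $d$ is well defined with $d \circ d = 0$ and that $\omega$ is a free generator on both sides, with associated automorphism $\nu_\omega = \nu_1\nu_2\nu_3$. Connectedness, ${\rm ker}(d\mid_A) = \Bbbk$, is immediate from the PBW basis, since $d$ kills only the constants. For integrability I would invoke Proposition \ref{integrableequiva}(4): taking $\omega$ as volume form, the left multiplication maps $\ell^k_{\pi_\omega}:\Omega^k A \to \mathcal{I}_{3-k}A$ are to be checked bijective for $k = 0, 1, 2$. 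Because every $\Omega^k A$ is free on the wedge monomials in the $dx_i$, one may take the dual families $\omega_i^k, \overline{\omega}_i^k$ of Proposition \ref{BrzezinskiSitarz2017Lemmas2.6and2.7}(2) to be the complementary monomials, and the decomposition identities there reduce to the commutation rules from the first step together with the invertibility of $\nu_\omega$.

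The main obstacle is the first step: verifying that the commutation rules extracted from the three defining relations are mutually consistent, that is, that they satisfy the overlap (Diamond-Lemma type) conditions coming from the triple product $x_3 x_2 x_1$, and that the induced endomorphisms $\nu_j$ are simultaneously extendable to commuting automorphisms. This is precisely where all five hypotheses are consumed: $c = \beta = \lambda = 0$ decouples the relations into two-generator pieces, the three product conditions align each twisting derivation with its automorphism, and $\mu a = 0$ secures commutativity of the $\nu_j$. Once consistency is established, the remaining verifications---connectedness, the volume-form property, and the bijectivity of the $\ell^k_{\pi_\omega}$---are routine computations on free modules.
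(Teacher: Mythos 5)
Your proposal follows essentially the same route as the paper: the explicit automorphisms $\nu_{x_1},\nu_{x_2},\nu_{x_3}$ determined by the commutation rules, the free rank-three first-order calculus twisted by them, the volume form $dx_1\wedge dx_2\wedge dx_3$ with $\nu_\omega=\nu_{x_1}\circ\nu_{x_2}\circ\nu_{x_3}$, and the verification of integrability via Proposition \ref{BrzezinskiSitarz2017Lemmas2.6and2.7}(2) using the complementary wedge monomials as dual families. The only bookkeeping difference is that in the paper $\mu a=0$ arises as a condition for $\nu_{x_1}$ to extend to an algebra homomorphism (after combining with the PBW relations), whereas the pairwise commutativity of the $\nu_{x_j}$ is already secured by the consistency relations of Proposition \ref{Bavula2023Theorem1.3} rather than by $\mu a=0$.
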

\begin{proof}
Consider the following automorphisms:
\begin{align}
   \nu_{x_1}(x_1) = &\ x_1, & \nu_{x_1}(x_2) = &\ q_1x_2+a, & \nu_{x_1}(x_3) = &\ q_2x_3+\alpha, \label{Auto1} \\ 
    \nu_{x_2}(x_1) = &\ q_1^{-1}x_1-bq_1^{-1}, & \nu_{x_2}(x_2) = &\ x_2, &  \nu_{x_2}(x_3) = &\ q_3x_3+\mu, \label{Auto2} \\
    \nu_{x_3}(x_1) = &\ q_2^{-1}x_1-\gamma q_2^{-1}, & \nu_{x_3}(x_2) = &\ q_3^{-1}x_2-\nu q_3^{-1}, & \nu_{x_3}(x_3) = &\ x_3. \label{Auto3}
\end{align}

The map $\nu_{x_1}$ can be extended to an algebra homomorphism of $A$ if and only if the definitions of $\nu_{x_1}(x_1)$, $\nu_{x_1}(x_2)$ and $\nu_{x_1}(x_3)$ respect relations  {\rm (}\ref{Bavula2023(8)}{\rm )}, {\rm (}\ref{Bavula2023(9)}{\rm )} and {\rm (}\ref{Bavula2023(10)}{\rm )}, i.e.
\begin{align*}
   \nu_{x_1}(x_2)\nu_{x_1}(x_1)-q_1\nu_{x_1}(x_1)\nu_{x_1}(x_2) = &\ a\nu_{x_1}(x_1) + b\nu_{x_1}(x_2)+b_1, \\
   \nu_{x_1}(x_3)\nu_{x_1}(x_1)-q_2\nu_{x_1}(x_1)\nu_{x_1}(x_3) = &\ \alpha\nu_{x_1}(x_1) +\gamma\nu_{x_1}(x_3)+b_2, \quad {\rm and} \\
  \nu_{x_1}(x_3)\nu_{x_1}(x_2)-q_3\nu_{x_1}(x_2)\nu_{x_1}(x_3) = &\ \mu\nu_{x_1}(x_2) + \nu\nu_{x_1}(x_3)+b_3.
\end{align*}

In this way, we obtain the equations 
\begin{align*}
     b_1(q_1-1)-ab = &\ 0,\\
     b_2(q_2-1)-\alpha\gamma = &\ 0, \\
     \mu(q_2-1) = &\ \alpha(q_3-1),\\  \nu(q_1-1) = &\ a(q_3-1), \quad {\rm and} \\ 
     b_3(q_1q_2-1) = &\ \alpha(a(q_3-1)+\nu).
\end{align*}

By putting together these five relations with those appearing in Proposition \ref{Bavula2023Theorem1.3} we get three new and additional relations given by
\begin{equation}\label{newnu1}
    b_1(q_1-1)-ab=0,  \quad  b_2(q_2-1)-\alpha\gamma=0 \quad {\rm and}\quad \mu a=0.
\end{equation}

Similarly, the map $\nu_{x_2}$ can be extended to an algebra homomorphism of $A$ if and only if the equalities
\begin{align*}
     b_1(q_1-1)-ab = &\ 0,\\
     \mu(q_2-1) = &\ \alpha(q_3-1),\\
     b(q_2-1) = &\ \gamma(q_1-1), \\
     b_2(q_1-q_3)+q_1\gamma\mu+b(\mu-q_2\mu-\alpha) = &\ 0, \quad {\rm and} \\
     b_3(q_3-1)-\nu\mu = &\ 0
\end{align*}

are satisfied. These relations together with Proposition \ref{Bavula2023Theorem1.3} and equations {\rm (}\ref{newnu1}{\rm )} generate the new relation
\begin{equation}\label{newnu2}
    b_3(q_3-1)-\nu\mu=0.
\end{equation}

Finally, by considering the extension of the map $\nu_{x_3}$ to an algebra homomorphism of $A$, we get the equations
\begin{align*}
    b_3(q_3-1)-\mu\nu = &\ 0,\\
    \nu(q_1-1) = &\ a(q_3-1), \\ 
    b(q_2-1) = &\ \gamma(q_1-1), \\
    b_1(1-q_2q_3)+\nu\mu(1-q_1)+q_3a\gamma+q_2b\nu = &\ 0,\quad {\rm and} \\ 
    \quad b_2(q_2-1)-\alpha \gamma = &\ 0.
\end{align*}

These relations with all obtained above do not generate a new relation, so we can extend the maps $\nu_{x_1}$, $\nu_{x_2}$ and $\nu_{x_3}$ to an algebra homomorphism if and only if
\begin{align}
    b_1(q_1-1)-ab = &\ 0,\\
    \quad b_2(q_2-1)-\alpha\gamma = &\ 0, \\ 
    b_3(q_3-1)-\mu\nu = &\ 0, \quad\ {\rm and} \\
    \mu a = &\ 0.
\end{align}

Since we need to guarantee that
\begin{align}\label{commuauto}
   \nu_{x_1} \circ \nu_{x_2} = &\ \nu_{x_2} \circ \nu_{x_1}, \\
   \nu_{x_1} \circ \nu_{x_3} = &\  \nu_{x_3} \circ \nu_{x_1}, \quad {\rm and} \\
   \nu_{x_3} \circ \nu_{x_2} = &\ \nu_{x_2} \circ \nu_{x_3}, 
\end{align}

it is enough to satisfy these equalities for the generators $x_1$, $x_2$ and $x_3$, i.e.
\begin{align}
\nu_{x_1} \circ \nu_{x_2}(x_1) = &\ q_1^{-1}x_1-bq_{1}^{-1}, \\
\nu_{x_2} \circ \nu_{x_1}(x_1) = &\ q_1^{-1}x_1-bq_{1}^{-1}, \\ 
\nu_{x_1} \circ \nu_{x_2}(x_2) = &\ q_1x_2+a, \\
\nu_{x_2} \circ \nu_{x_1}(x_2) = &\ q_1x_2+a, \label{comp12} \\
 \nu_{x_1} \circ \nu_{x_2}(x_3) = &\ q_2q_3x_3+q_3\alpha+\mu, \quad {\rm and} \label{comp123} \\
 \nu_{x_2} \circ \nu_{x_1}(x_3) = &\ q_2q_3x_3+q_2\mu+\alpha. \label{comp213}
\end{align}

As we can see, composition $\nu_{x_1} \circ \nu_{x_2} = \nu_{x_2} \circ \nu_{x_1}$ is always satisfied, so we only consider the relations {\rm (}\ref{comp123}{\rm )} and {\rm (}\ref{comp213}{\rm )} which hold when $\alpha(q_3-1)=\mu(q_2-1)$; this is precisely the relation (\ref{Bavula2023(11)}) in Proposition \ref{Bavula2023Theorem1.3}.

Next, 
\begin{align}
    \nu_{x_1} \circ \nu_{x_3}(x_1) = &\ q_2^{-1}x_1-\gamma q_{2}^{-1}, \label{comp13}\\
    \nu_{x_3} \circ \nu_{x_1}(x_1) = &\ q_2^{-1}x_1-\gamma q_{2}^{-1}, \label{comp21} \\
    \nu_{x_1} \circ \nu_{x_3}(x_2) = &\ q_1q_3^{-1}x_2+q_3^{-1}a-\nu q_3^{-1}, \label{comp13'}\\
    \nu_{x_3} \circ \nu_{x_1}(x_2) = &\ q_1q_3^{-1}x_2-\nu q_1q_3^{-1}+a, \label{comp22} \\
    \nu_{x_1} \circ \nu_{x_3}(x_3) = &\ q_2x_3+\alpha, \quad {\rm and} \label{comp22'} \\
    \nu_{x_3} \circ \nu_{x_1}(x_3) = &\ q_2x_3+\alpha. \label{comp23}
\end{align}

Again, the compositions shown in {\rm (}\ref{comp13}{\rm )} and {\rm (}\ref{comp21}{\rm )}, and compositions {\rm (}\ref{comp22'}{\rm )} and {\rm (}\ref{comp23}{\rm )} are the same. Relations {\rm (}\ref{comp13'}{\rm )} and {\rm (}\ref{comp22}{\rm )} coincide when $a (q_3 - 1) = \nu(q_1 - 1)$ which occurs is satisfied due to Proposition \ref{Bavula2023Theorem1.3}.

Finally, 
\begin{align}
    \nu_{x_3} \circ \nu_{x_2}(x_1) = &\ q_1^{-1}q_2^{-1}x_1-q_1^{-1}q_2^{-1}\gamma-bq_1^{-1}, \label{comp31'} \\
    \nu_{x_2} \circ \nu_{x_3}(x_1) = &\ q_1^{-1}q_2^{-1}x_1-q_1^{-1}q_2^{-1}b-\gamma q_2^{-1}, \label{comp31} \\
    \nu_{x_3} \circ \nu_{x_2}(x_2) = &\ q_3^{-1}x_2-\nu q_3^{-1}, \label{comp32'} \\
    \nu_{x_2} \circ \nu_{x_3}(x_2) = &\ q_3^{-1}x_2-\nu q_3^{-1},  \label{comp32} \\
    \nu_{x_3} \circ \nu_{x_2}(x_3) = &\ q_3x_3+\mu, \quad {\rm and} \label{comp33'}\\
    \nu_{x_2} \circ \nu_{x_3}(x_3) = &\ q_3x_3+\mu. \label{comp33}
\end{align}

Relations {\rm (}\ref{comp32'}{\rm )} and {\rm (}\ref{comp32}{\rm )} are equal, and {\rm (}\ref{comp33'}{\rm )} and {\rm (}\ref{comp33}{\rm )} coincide. Note that relations {\rm (}\ref{comp31'}{\rm )} and {\rm (}\ref{comp31}{\rm )} are the same if $\gamma (q_1 - 1) = b (q_2 - 1)$, which is the case by {\rm (}\ref{Bavula2023(13)}{\rm )} in Proposition \ref{Bavula2023Theorem1.3}.

Consider $\Omega^{1}A$ a free right $A$-module of rank three with generators $dx_1$, $dx_2$ and $dx_3$. For all $p\in A$ define a left $A$-module structure by
\begin{align}
    pdx_1 = &\ dx_1 \nu_{x_1}(p), \notag \\ \quad pdx_2 = &\ dx_2\nu_{x_2}(p),\ {\rm and} \notag \\
    pdx_3 = &\ dx_3\nu_{x_3}(p) \label{relrightmod}.
\end{align}

The relations in $\Omega^{1}A$ are given by 
\begin{align}
x_1dx_1 = &\ dx_1 x_1, \notag \\
x_1dx_2 = &\ q_1^{-1}dx_2x_1-bq_1^{-1}dx_2, \notag \\
x_1dx_3 = &\ q_2^{-1}dx_3x_1-\gamma q_2^{-1}dx_3, \label{rel1} \\
x_2dx_1 = &\ q_1dx_1x_2+adx_1, \notag \\  
x_2dx_2 = &\ dx_2x_2, \notag \\
x_2dx_3 = &\ q_3^{-1}dx_3x_2-\nu q_3^{-1}dx_3, \label{rel2} \\
x_3dx_1 = &\ q_2dx_1x_3+\alpha dx_1, \notag \\
x_3dx_2 = &\ q_3dx_2x_3 + \mu dx_2, \ {\rm and} \notag \\
x_3dx_3 = &\ dx_3x_3. \label{rel3} 
\end{align}
    
We want to extend the correspondences 
\begin{equation*}
x_1 \mapsto d x_1, \quad x_2 \mapsto d x_2 \quad {\rm and} \quad x_3\mapsto d x_3
\end{equation*} 

to a map $d: A \to \Omega^{1} A$ satisfying the Leibniz rule. This is possible if it is compatible with the nontrivial relations {\rm (}\ref{Bavula2023(8)}{\rm )}, {\rm (}\ref{Bavula2023(9)}{\rm )} and {\rm (}\ref{Bavula2023(10)}{\rm )}, i.e. if the equalities
\begin{align*}
        dx_2x_1+x_2dx_1 &\ = q_1dx_1x_2+q_1x_1dx_2+adx_1+bdx_2, \\
        dx_3x_1+x_3dx_1 &\ = q_2dx_1x_3+q_2x_1dx_3+\alpha dx_1+\gamma dx_3,\ {\rm and} \\
        dx_3x_2+x_3dx_2 &\ = q_3dx_2x_3+q_3x_2dx_3+\mu dx_2+\nu dx_3
\end{align*}

hold. Note that $d(b_i)=0$ for $i=1,2,3$.

Define $\Bbbk$-linear maps 
\begin{equation*}
\partial_{x_1}, \partial_{x_2}, \partial_{x_3}: A \rightarrow A
\end{equation*}

such that
\begin{align*}
    d(a)=dx_1\partial_{x_1}(a)+dx_2\partial_{x_2}(a)+dx_3\partial_{x_3}(a), \quad {\rm for\ all} \ a \in A.
\end{align*}

Since $dx_1$, $dx_2$ and $dx_3$ are free generators of the right $A$-module $\Omega^1A$, these maps are well-defined. Note that $d(a)=0$ if and only if $\partial_{x_1}(a)=\partial_{x_2}(a)=\partial_{x_3}(a)=0$. By using the three relations in {\rm (}\ref{relrightmod}{\rm )} and the definitions of the maps $\nu_{x_1}$, $\nu_{x_2}$ and $\nu_{x_3}$, we get that
\begin{align}
\partial_{x_1}(x_1^kx_2^lx_3^s) = &\ kx_1^{k-1}x_2^lx_3^s, \notag \\
\partial_{x_2}(x_1^kx_2^lx_3^s) = &\ lq_1^{-k}(x_1-b)^kx_2^{l-1}x_3^s, \quad {\rm and} \notag \\
\partial_{x_3}(x_1^kx_2^lx_3^s) = &\ skq_2^{-k}q_3^{-l}(x_1-\gamma)^k(x_2-\nu)^lx_3^{s-1}.
\end{align}

Thus $d(a)=0$ if and only if $a$ is a scalar multiple of the identity. This shows that $(\Omega A,d)$ is connected where $\Omega A = \Omega^0 A \oplus \Omega^1 A \oplus \Omega^2 A$.

The universal extension of $d$ to higher forms compatible with {\rm (}\ref{rel1}{\rm )}, {\rm (}\ref{rel2}{\rm )} and {\rm (}\ref{rel3}{\rm )} gives the following rules for $\Omega^2A$:
\begin{align}
dx_2\wedge dx_1 = &\ -q_1dx_1\wedge dx_2, \\
dx_3\wedge dx_1 = &\ -q_2dx_1\wedge dx_3, \quad {\rm and} \\ 
dx_3\wedge dx_2 = &\ -q_3dx_2\wedge dx_3 \label{relsecond}.
\end{align}

Since the automorphisms $\nu_{x_1}$, $\nu_{x_2}$ and $\nu_{x_3}$ commute with each other, there are no additional relationships to the previous ones, so we get the expression
\begin{align*}
    \Omega^2A =  dx_1\wedge dx_2A\oplus dx_1\wedge dx_3A\oplus dx_2\wedge dx_3A.
\end{align*}

Since $\Omega^3A = \omega A\cong A$ as a right and left $A$-module, with $\omega=dx_1\wedge dx_2 \wedge dx_3$, where $\nu_{\omega}=\nu_{x_1}\circ\nu_{x_2}\circ\nu_{x_3}$, we have that $\omega$ is a volume form of $A$. From Proposition \ref{BrzezinskiSitarz2017Lemmas2.6and2.7} (2) we get that $\omega$ is an integral form by setting
\begin{align*}
\omega_1^1  = &\ \bar{\omega}_1^1 = dx_1, \\  
\omega_2^1 = &\ \bar{\omega}_2^1 = dx_2, \\
\omega_3^1 = &\ \bar{\omega}_3^1 = dx_3, \\
    \omega_1^2 = &\ dx_2\wedge dx_3, \\
    \omega_2^2 = &\ -q_1^{-1}dx_1\wedge dx_3, \\ 
    \omega_3^2 = &\ q_2^{-1}q_{3}^{-1}dx_1\wedge dx_3, \\
    \bar{\omega}_1^2 = &\ q_1^{-1}q_2^{-1}dx_2\wedge dx_3, \\
    \bar{\omega}_2^2 = &\ -q_3^{-1}dt\wedge dx_2, \quad {\rm and} \\ 
    \bar{\omega}_3^2 = &\ dx_1\wedge dx_2.
\end{align*}

By Proposition \ref{BrzezinskiSitarz2017Lemmas2.6and2.7} (2), we consider the expression $\omega' := dx_1a + dx_2b + dx_3c$ with $a, b, c \in \Bbbk$, to obtain the equality
\begin{align*}
    \sum_{i=1}^{3}\omega_{i}^{1}\pi_{\omega}(\bar{\omega}_i^{2}\wedge \omega') = &\ dx_1\pi_{\omega}(q_1^{-1}q_2^{-1}adx_2\wedge dx_3\wedge dx_1) \\
    & + dx_2\pi_{\omega}(-q_{3}^{-1}bdx_1\wedge dx_3\wedge dx_2) \\
    & + dx_3\pi_{\omega}(cdx_1\wedge dx_2\wedge dx_3) \\
    = &\ dx_1a+dx_2b+dx_3c \\
    = &\ \omega'.
\end{align*}

On the other hand, if $\omega'' := dx_1\wedge dx_2a+dx_1\wedge dx_3 b+dx_2\wedge dx_3 c$ where $a,b,c \in \Bbbk$, we get that
\begin{align*}
\sum_{i=1}^{3}\omega_{i}^{2}\pi_{\omega}(\bar{\omega}_i^{1}\wedge \omega'') = &\  dx_2\wedge dx_3\pi_{\omega}(cdx_1\wedge dx_2 \wedge dx_3) \\
&\ - q_{1}^{-1}dx_1\wedge dx_3\pi_{\omega}(bdx_2\wedge dx_1\wedge dx_3) \\
    &\ + q_{2}^{-1}q_{3}^{-1}dx_1\wedge dx_2\pi_{\omega}(adx_3 \wedge dx_1 \wedge dx_2) \\ 
    = &\ dx_1\wedge dx_2a+dx_1\wedge dx_3b+dx_2\wedge dx_3 \\
    = &\ \omega''.
\end{align*}

As we have seen above, all elements of different degrees can be generated by $\omega_i^j$ and $\bar{\omega}_i^{3-j}$  for $j=1,2$ and $i = 1 , 2, 3$, so Proposition \ref{BrzezinskiSitarz2017Lemmas2.6and2.7} (2) guarantees that $\omega$ is an integral form. Finally, Proposition \ref{integrableequiva} shows that $(\Omega A, d)$ is an integrable differential calculus of degree $n$, and since ${\rm GKdim}(A)$ is also $n$, it follows that $A$ is differentially smooth.
\end{proof}

Following Bavula's classification presented in \cite{Bavula2023}, Theorem \ref{Firsttheoremsmoothnessbi-quadraticalgebras} shows that there are twenty two bi-quadratic algebras on three generators that are differentially smooth. On the other hand, Theorem \ref{Secondtheoremsmoothnessbi-quadraticalgebras} shows that twenty one bi-quadratic algebras cannot be differentially smooth. This is the second important result of the paper.

\begin{theorem}\label{Secondtheoremsmoothnessbi-quadraticalgebras}
If one of the conditions $c\not = 0$, $\beta\not=0$ or $\lambda\not=0$ holds, then $A$ is not differentially smooth.
\end{theorem}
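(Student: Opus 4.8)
The plan is to prove the contrapositive structure directly: assuming $A$ is differentially smooth and deriving the negation of the hypothesis, or equivalently, showing that if any of $c\neq 0$, $\beta\neq 0$, or $\lambda\neq 0$ holds, then no $3$-dimensional connected integrable calculus can exist on $A$. Since $\mathrm{GKdim}(A)=3$ is already established, differential smoothness forces the calculus to be exactly $3$-dimensional, so the obstruction must be located in the existence of a suitable volume form together with bijective left-multiplication maps (condition (4) of Proposition \ref{integrableequiva}). The natural strategy mirrors the proof of Theorem \ref{Firsttheoremsmoothnessbi-quadraticalgebras} in reverse: there, the commuting automorphisms $\nu_{x_1},\nu_{x_2},\nu_{x_3}$ extending the assignments \eqref{Auto1}--\eqref{Auto3} existed precisely because the relevant structure constants vanished, and these automorphisms were exactly what furnished the $A$-bimodule structure on $\Omega^1A$ and hence the integrating form. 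So I would argue that each of $c\neq 0$, $\beta\neq 0$, $\lambda\neq 0$ obstructs the extension of such automorphisms (or of any algebra endomorphism $\nu_\omega$ realizing the volume form), breaking the Poincar\'e-duality isomorphisms $\Theta_k$.

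First I would set up the general skeleton of any putative integrable calculus. By Proposition \ref{integrableequiva}(4), differential smoothness requires a volume form $\omega\in\Omega^3A$ and, via \eqref{BrzezinskiSitarz2017(2.2)}, an associated algebra \emph{automorphism} $\nu_\omega$ of $A$ determined by $a\omega=\omega\nu_\omega(a)$. The key structural constraint is that the first-order calculus $\Omega^1A$ must be a free $A$-bimodule of rank three whose left and right actions are intertwined by algebra endomorphisms of $A$; consistency of these endomorphisms with the defining relations \eqref{Bavula2023(8)}--\eqref{Bavula2023(10)} is exactly what the computation in the previous proof tested. I would therefore examine which homogeneous degree-raising linear map $d$ compatible with the bimodule relations can satisfy $d\circ d=0$ and the graded Leibniz rule. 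The heart of the argument is to show that when $c\neq 0$ (the coefficient of $x_3$ in the $x_2x_1$-relation), or $\beta\neq 0$ (coefficient of $x_2$ in the $x_3x_1$-relation), or $\lambda\neq 0$ (coefficient of $x_1$ in the $x_3x_2$-relation), the linear system forcing $\nu_\omega$ (equivalently the twisting endomorphisms of the bimodule) to be a well-defined algebra homomorphism becomes inconsistent, so no volume form with an accompanying automorphism can exist.

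Concretely, I would feed the candidate twisting maps into the three relations and extract, as in the forward proof, the resulting scalar constraints. The point is that the "off-diagonal" constants $c,\beta,\lambda$ are precisely the ones that were \emph{assumed to vanish} in the first line of Theorem \ref{Firsttheoremsmoothnessbi-quadraticalgebras}; tracing the computation shows that a nonzero value of any one of them introduces a term that cannot be absorbed, because it couples generators across relations in a way incompatible with the diagonal form of the bimodule twisting forced by freeness and by $d\circ d=0$. I would verify this case by case for the three scenarios, each reducing to an inconsistent equation (a nonzero scalar required to equal zero), which rules out the existence of the automorphism $\nu_\omega$ and hence of the integrating volume form. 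By Proposition \ref{integrableequiva}, the absence of such a form means $(\Omega A,d)$ cannot be integrable in dimension three, and since $\mathrm{GKdim}(A)=3$ pins the dimension, Definition \ref{BrzezinskiSitarz2017Definition2.4} then yields that $A$ is not differentially smooth.

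The main obstacle I anticipate is establishing that \emph{no} alternative calculus evades the obstruction — that is, ruling out exotic volume forms or nondiagonal bimodule structures that might circumvent the specific endomorphisms \eqref{Auto1}--\eqref{Auto3}. The forward theorem only needed to exhibit one working calculus, but the negative result must close off all possibilities. I would handle this by arguing that freeness of $\Omega^1A$ as a rank-three bimodule over $A$ together with the PBW structure rigidly constrains the twisting maps to be determined by their action on the generators $x_1,x_2,x_3$, reducing the seemingly infinite search to the finite scalar system already analyzed; the inconsistency then holds universally rather than just for the natural choice. Pinning down this rigidity cleanly, so that the case analysis genuinely exhausts all candidate integrable structures, is the delicate part of the proof.
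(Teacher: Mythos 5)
There is a genuine gap in your proposal, and it sits exactly where you flagged the ``delicate part'': you never actually close off all candidate calculi, and the mechanism you propose for doing so is circular. Your plan is to assume $\Omega^{1}A$ is a free rank-three bimodule with diagonal twisting by algebra endomorphisms $\nu_{x_i}$, derive an inconsistent scalar system when $c\neq 0$, $\beta\neq 0$ or $\lambda\neq 0$, and then argue that ``freeness of $\Omega^{1}A$ as a rank-three bimodule together with the PBW structure'' forces every integrable calculus into this mold. But nothing in the definition of differential smoothness requires $\Omega^{1}A$ to be free of rank three, let alone to carry a diagonal twisting of the form $p\,dx_i = dx_i\,\nu_{x_i}(p)$; those were \emph{choices} made in the constructive direction (Theorem \ref{Firsttheoremsmoothnessbi-quadraticalgebras}), not consequences of integrability. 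Ruling out only calculi of that shape does not prove the theorem, and invoking rank-three freeness to establish the rigidity is assuming the very structure whose impossibility is at issue.

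The paper's proof avoids this entirely with a much more elementary observation that requires no hypothesis on the shape of the calculus. For \emph{any} first-order calculus, applying $d$ to the defining relation \eqref{Bavula2023(8)} and using the Leibniz rule gives
\begin{equation*}
dx_2\, x_1 + x_2\, dx_1 = q_1\, dx_1\, x_2 + q_1\, x_1\, dx_2 + a\, dx_1 + b\, dx_2 + c\, dx_3 ,
\end{equation*}
so if $c\neq 0$ (and symmetrically for $\beta$ or $\lambda$ with the other two relations), then $dx_3$ lies in the sub-bimodule generated by $dx_1$ and $dx_2$. Hence $\Omega^{1}A = A\,dA$ is generated by two elements, $\Omega^{3}A = \Omega^{1}A\wedge\Omega^{1}A\wedge\Omega^{1}A = 0$, and no three-dimensional calculus exists; since $\mathrm{GKdim}(A)=3$, $A$ cannot be differentially smooth. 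The point you are missing is that the obstruction is already present at the level of the number of generators of $\Omega^{1}A$ --- before any question of volume forms, twisting automorphisms, or the isomorphisms $\Theta_k$ arises --- and this is what makes the negative statement provable for \emph{all} calculi at once rather than for a parametrized family.
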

\begin{proof}
By contradiction. Suppose that $A$ has a first order differential calculus $(\Omega A, d)$. Without loss of generality, we consider the case $c\not =0$. Since $d$ must be compatible with the relation {\rm (}\ref{Bavula2023(8)}{\rm )}, then we get that
\begin{equation*}
    dx_2 x_1+x_2dx_1 = q_1dx_1x_2 +q_1x_1dx_2 + adx_1 + bdx_2 + cdx_3,
\end{equation*}

whence $dx_3$ is generated by $dx_1$ and $dx_2$. This means that $\Omega^1A$ is generated by two elements and $\Omega^3A=\Omega^1A\wedge\Omega^1A\wedge\Omega^1A=0$, i.e. there is no third-order calculus. Since ${\rm GKdim}(A) = 3$, we conclude that $A$ cannot be differentially smooth.
\end{proof}

Tables \ref{FirsttableDSBiquadraticalgebras3}, \ref{SecondtableDSBiquadraticalgebras3}, \ref{ThirdtableDSBiquadraticalgebras3}, \ref{FourthtableDSBiquadraticalgebras3}, \ref{FifthtableDSBiquadraticalgebras3}, \ref{SixthtableDSBiquadraticalgebras3}, \ref{SeventhtableDSBiquadraticalgebras3} and \ref{EighttableDSBiquadraticalgebras3} contain the explicit relations of bi-quadratic algebras on three-generators. There, by Theorems \ref{Firsttheoremsmoothnessbi-quadraticalgebras} and \ref{Secondtheoremsmoothnessbi-quadraticalgebras}, the symbols $\checkmark$ and $\star$ denote a positive and a negative answer, respectively, on its differential smoothness.

\begin{remark}
It is a pending task to determine the differential smoothness of the quantum Weyl algebra (Section \ref{DSBiquadraticalgebrasTwogenerators}) and of algebra $D_1$ (Table \ref{SecondtableDSBiquadraticalgebras3}). It is important to note that if one algebra is not differential smooth, this does not mean that the algebra does not possess a differential calculus. The universal enveloping algebra $U(\mathfrak{sl}_2(\Bbbk))$ is an illustration of this fact as it was shown by Beggs and Majid \cite{BeggsMajid2020}. By using Hopf algebras, they proved the existence of a first-order differential calculus and other properties of Riemannian quantum geometry that are different from the differential smoothness shown in this paper.
\end{remark}

\section{Future work}\label{FutureworkDifferentialsmoothnessofbiquadraticalgebras}

As expected, a natural task is to investigate the differential smoothness of bi-quadratic algebras on $n$ generators. Precisely, Bavula \cite[p. 699]{Bavula2023} asserted that a construction of bi-quadratic algebras on four generators was introduced by Zhang and Zhang \cite{ZhangZhang2008, ZhangZhang2009} with their class of {\em double Ore extensions}. As one can appreciate in the literature, these extensions are of great importance in the noncommutative algebraic geometry, and more exactly, in the classification of {\em Artin-Schelter regular algebras} introduced by Artin and Schelter \cite{ArtinSchelter1987} (see the excellent surveys on these algebras carried out by Bellamy et al. \cite{Bellamyetal2016} and Rogalski \cite{Rogalski2023}). The characterization of the differential smoothness of double Ore extensions is one of our immediate objectives.

On the other hand, since bi-quadratic algebras are related to other families of noncommutative rings of polynomial type such as those mentioned in the Introduction, a second natural task is to investigate the extension of the theory developed here to these families of algebras with the aim of studying its differential smoothness.

\newpage

\begin{table}[h]
\caption{Bi-quadratic algebras on three generators of Lie type \cite[Theorem 1.4]{Bavula2023}}
\label{FirsttableDSBiquadraticalgebras3}
\begin{center}
\resizebox{12cm}{!}{
\begin{tabular}{ |c|c|c|c| } 
\hline
Conditions & Bi-quadratic algebra & Matrix form (Proposition \ref{Bavula2023Theorem1.3}) & Differential smoothness \\
\hline
 & $\Bbbk[x_1,x_2,x_3]$ & $\mathbb{A}=0_{3 \times 3}$, $\mathbb{B}=0_{3 \times 1}$ & $\checkmark$ \\ \cline{2-4}
& $U(\mathfrak{sl}_2(\Bbbk))$ & $\mathbb{A} = \begin{bmatrix}
    0 & 0 & -1 \\ 2 & 0 & 0 \\ 0 & -2 & 0
\end{bmatrix}$, $\mathbb{B}=0_{3 \times 1}$ & $\star$ \\ \cline{2-4}
\multirow{6}{*}{$q_1=q_2=q_3=1$} & $U(\mathcal{H}_3)$ & $\mathbb{A} = \begin{bmatrix}
    0 & 0 & -1 \\ 0 & 0 & 0 \\ 0 & 0 & 0
\end{bmatrix}$, $\mathbb{B}=0_{3 \times 1}$ & $\star$ \\ \cline{2-4}
 & $U(\mathcal{N})/(c-1)$ & $\mathbb{A} = \begin{bmatrix}
    0 & 0 & -1 \\ 0 & 0 & 0 \\ 0 & 0 & 0
\end{bmatrix}$, $\mathbb{B} = \begin{bmatrix}
    0  \\ 0  \\ -1
\end{bmatrix}$ & $\star$ \\ \cline{2-4}
& $\Bbbk\langle x_1, x_2, x_3 | [x_1,x_2]=x_2\rangle$ & $\mathbb{A} = \begin{bmatrix}
    0 & -1 & 0 \\ 0 & 0 & 0 \\ 0 & 0 & 0
\end{bmatrix}$, $\mathbb{B}=0_{3 \times 1}$ & $\checkmark$ \\ \cline{2-4}
& $U(\mathcal{M})/(c-1)$ & $\mathbb{A} = \begin{bmatrix}
    0 & -1 & 0 \\ 0 & 0 & 0 \\ 0 & 0 & 0
\end{bmatrix}$, $\mathbb{B} = \begin{bmatrix}
    0  \\ -1  \\ 0
\end{bmatrix}$ & $\checkmark$ \\ \cline{2-4}
\hline
\end{tabular}
}
\end{center}
\end{table}

\begin{table}[h]
\caption{Bi-quadratic algebras on three generators \cite[Theorem 1.5]{Bavula2023}}
\label{SecondtableDSBiquadraticalgebras3}
\begin{center}
\resizebox{12cm}{!}{
\begin{tabular}{ |c|c|c|c| } 
\hline
Conditions & Bi-quadratic algebra & Matrix form (Proposition \ref{Bavula2023Theorem1.3}) & Differential smoothness \\
\hline
 & $A_1$ & $\mathbb{A} = \begin{bmatrix}
    0 & 0 & 0 \\ 1 & 0 & 0 \\ 0 & \mu & 0
\end{bmatrix}$, $\mu\not=-1$, $\mathbb{B}=0_{3 \times 1}$ & $\checkmark$ \\ \cline{2-4}
& $A_2$ & $\mathbb{A} = \begin{bmatrix}
    0 & 0 & 0 \\ 0 & 0 & 0 \\ 0 & 1 & 0
\end{bmatrix}$, $\mathbb{B}=0_{3 \times 1}$ & $\checkmark$ \\ \cline{2-4}
& $B_1$ & $\mathbb{A}=0_{3 \times 3}$, $\mathbb{B}=0_{3 \times 1}$ & $\checkmark$ \\ \cline{2-4}
\multirow{10}{*}{$q_1\not = 1$ and $q_2 = q_3 = 1$} & $B_2$ & $\mathbb{A}=0_{3 \times 3}$, $\mathbb{B} = \begin{bmatrix}
    1  \\ 0  \\ 0
\end{bmatrix}$ & $\checkmark$ \\ \cline{2-4}
& $B_3$ & $\mathbb{A} = \begin{bmatrix}
    0 & 0 & 1 \\ 0 & 0 & 0 \\ 0 & 0 & 0
\end{bmatrix}$, $\mathbb{B}=0_{3 \times 1}$ & $\star$ \\ \cline{2-4}
 & $B_4$ & $\mathbb{A} = \begin{bmatrix}
    0 & 0 & 1 \\ 0 & 0 & 0 \\ 0 & 0 & 0
\end{bmatrix}$, $\mathbb{B} = \begin{bmatrix}
    1  \\ 0  \\ 0
\end{bmatrix}$ & $\star$ \\ \cline{2-4}
& $C_1$ & $\mathbb{A} = \begin{bmatrix}
    0 & 0 & 0 \\ 1 & 0 & 0 \\ 0 & -1 & 0
\end{bmatrix}$, $\mathbb{B}=0_{3 \times 1}$ & $\checkmark$ \\ \cline{2-4}
& $C_2$ & $\mathbb{A} = \begin{bmatrix}
    0 & 0 & 1 \\ 1 & 0 & 0 \\ 0 & -1 & 0
\end{bmatrix}$, $\mathbb{B} = \begin{bmatrix}
    1  \\ 0  \\ 0
\end{bmatrix}$ & $\star$ \\ \cline{2-4}
& $D_1$ & $\mathbb{A} = \begin{bmatrix}
    0 & 0 & 0 \\ 1 & 0 & 0 \\ 0 & -1 & 0
\end{bmatrix}$, $\mathbb{B} = \begin{bmatrix}
    1  \\ 0  \\ 0
\end{bmatrix}$ & ? \\ \cline{2-4}
& $D_2$ & $\mathbb{A} = \begin{bmatrix}
    0 & 0 & 1 \\ 1 & 0 & 0 \\ 0 & -1 & 0
\end{bmatrix}$, $\mathbb{B} = \begin{bmatrix}
    b_1  \\ 0  \\ 0
\end{bmatrix}$, $b_1\not = 1$ & $\star$ \\ 
\hline
\end{tabular}
}
\end{center}
\end{table}

\begin{table}[h]
\caption{Bi-quadratic algebras on three generators \cite[Theorem 1.6]{Bavula2023}}
\label{ThirdtableDSBiquadraticalgebras3}
\begin{center}
\resizebox{12cm}{!}{
\begin{tabular}{ |c|c|c|c| } 
\hline
Conditions & Bi-quadratic algebra & Matrix form (Proposition \ref{Bavula2023Theorem1.3}) & Differential smoothness \\
\hline
&$\mathbb{A}_{(q_1,q_2,1)}^{3}$ & $\mathbb{A}=0_{3 \times 3}$, $\mathbb{B}=0_{3 \times 1}$, $q_1q_2\not=1$ & $\checkmark$\\ \cline{2-4}
 & $E_1$ & $\mathbb{A}=0_{3 \times 3}$, $\mathbb{B}=0_{3 \times 1}$, $q_2=q_1^{-1}$ & $\checkmark$ \\ \cline{2-4}
\multirow{5}{*}{$q_1\not = 1$,\ $q_2\not = 1$ \ and $q_3=1$} & $E_2$ & $\mathbb{A}=0_{3 \times 3}$, $\mathbb{B} = \begin{bmatrix}
    0  \\ 0  \\ 1
\end{bmatrix}$, $q_2=q_1^{-1}$ & $\checkmark$ \\ \cline{2-4}
& $E_3$ & $\mathbb{A} = \begin{bmatrix}
    0 & 0 & 0 \\ 0 & 0 & 0 \\ 1 & 0 & 0
\end{bmatrix}$, $\mathbb{B}=0_{3 \times 1}$, $q_2=q_1^{-1}$ & $\star$ \\ \cline{2-4}
& $E_4$ & $\mathbb{A} = \begin{bmatrix}
    0 & 0 & 0 \\ 0 & 0 & 0 \\ 1 & 0 & 0
\end{bmatrix}$, $\mathbb{B} = \begin{bmatrix}
    0  \\ 0  \\ 1
\end{bmatrix}$, $q_2=q_1^{-1}$ & $\star$ \\ 
\hline
\end{tabular}
}
\end{center}
\end{table}

\newpage

\begin{table}[h]
\caption{Quantum bi-quadratic algebras on three generators \cite[Theorem 1.7]{Bavula2023}}
\label{FourthtableDSBiquadraticalgebras3}
\begin{center}
\resizebox{12cm}{!}{
\begin{tabular}{ |c|c|c|c| } 
\hline
Conditions & Bi-quadratic algebra & Matrix form (Proposition \ref{Bavula2023Theorem1.3}) & Differential smoothness \\
\hline
 & $F_1$ & $\mathbb{A} = \begin{bmatrix}
    0 & 0 & 1 \\ 0 & 1 & 0 \\ 1 & 0 & 0
\end{bmatrix}$, $\mathbb{B} = \begin{bmatrix}
    b_1  \\ b_2  \\ b_3
\end{bmatrix}$, $b_1,b_2,b_3 \in \Bbbk$ & $\star$ \\ \cline{2-4}
& $F_2$ & $\mathbb{A} = \begin{bmatrix}
    0 & 0 & 1 \\ 0 & 1 & 0 \\ 0 & 0 & 0
\end{bmatrix}$, $\mathbb{B} = \begin{bmatrix}
    b_1  \\ b_2  \\ 0
\end{bmatrix}$, $b_1,b_2 \in \Bbbk$ & $\star$ \\ \cline{2-4}
 & $F_3$ & $\mathbb{A} = \begin{bmatrix}
    0 & 0 & 1 \\ 0 & 1 & 0 \\ 0 & 0 & 0
\end{bmatrix}$, $\mathbb{B} = \begin{bmatrix}
    b_1  \\ b_2  \\ 1
\end{bmatrix}$, $b_1,b_2 \in \Bbbk$ & $\star$ \\ \cline{2-4}
\multirow{10}{*}{$q_1 - q_3 = 0$ \ and $1-q_1q_2 = 0$} & $F_4$ & $\mathbb{A} = \begin{bmatrix}
    0 & 0 & 1 \\ 0 & 0 & 0 \\ 0 & 0 & 0
\end{bmatrix}$, $\mathbb{B} = \begin{bmatrix}
    b_1  \\ 0  \\ 0
\end{bmatrix}$, $b_1 \in \Bbbk$ & $\star$ \\ \cline{2-4}
& $F_5$ & $\mathbb{A} = \begin{bmatrix}
    0 & 0 & 1 \\ 0 & 0 & 0 \\ 0 & 0 & 0
\end{bmatrix}$, $\mathbb{B} = \begin{bmatrix}
    b_1  \\ 1  \\ 0
\end{bmatrix}$, $b_1 \in \Bbbk$ & $\star$ \\ \cline{2-4}
& $F_6$ & $\mathbb{A} = \begin{bmatrix}
    0 & 0 & 1 \\ 0 & 0 & 0 \\ 0 & 0 & 0
\end{bmatrix}$, $\mathbb{B} = \begin{bmatrix}
    b_1  \\ 1  \\ 1
\end{bmatrix}$, $b_1 \in \Bbbk$ & $\star$ \\ \cline{2-4}
& $F_7$ & $\mathbb{A}=0_{3 \times 3}$, $\mathbb{B}=0_{3 \times 1}$ & $\checkmark$ \\ \cline{2-4}
& $F_8$ & $\mathbb{A}=0_{3 \times 3}$, $\mathbb{B} = \begin{bmatrix}
    1  \\ 0  \\ 0
\end{bmatrix}$ & $\checkmark$ \\ \cline{2-4}
& $F_9$ & $\mathbb{A}=0_{3 \times 3}$, $\mathbb{B} = \begin{bmatrix}
    1  \\ 1  \\ 0
\end{bmatrix}$ & $\checkmark$ \\ \cline{2-4}
& $F_{10}$ & $\mathbb{A}=0_{3 \times 3}$, $\mathbb{B} = \begin{bmatrix}
    1  \\ 1  \\ 1
\end{bmatrix}$ & $\checkmark$ \\ 
\hline
\end{tabular}
}
\end{center}
\end{table}

\begin{table}[h]
\caption{Bi-quadratic algebras on three generators \cite[Theorem 1.8]{Bavula2023}}
\label{FifthtableDSBiquadraticalgebras3}
\begin{center}
\resizebox{12cm}{!}{
\begin{tabular}{ |c|c|c|c| } 
\hline
Conditions & Bi-quadratic algebra & Matrix form (Proposition \ref{Bavula2023Theorem1.3}) & Differential smoothness \\
\hline
 & $G_1$ & $\mathbb{A}=0_{3 \times 3}$, $\mathbb{B}=0_{3 \times 1}$ & $\checkmark$ \\ \cline{2-4}
\multirow{4}{*}{$q_1 - q_3 = 0$\ and $1 - q_1q_2 \not = 0$} & $G_2$ & $\mathbb{A}=0_{3 \times 3}$, $\mathbb{B} = \begin{bmatrix}
    0  \\ 1  \\ 0
\end{bmatrix}$ & $\checkmark$ \\ \cline{2-4}
& $G_3$ & $\mathbb{A} = \begin{bmatrix}
    0 & 0 & 0 \\ 0 & 1 & 0 \\ 0 & 0 & 0
\end{bmatrix}$, $\mathbb{B}=0_{3 \times 1}$ & $\star$ \\ \cline{2-4}
& $G_4$ & $\mathbb{A} = \begin{bmatrix}
    0 & 0 & 0 \\ 0 & 1 & 0 \\ 0 & 0 & 0
\end{bmatrix}$, $\mathbb{B} = \begin{bmatrix}
    0  \\ 1  \\ 0
\end{bmatrix}$ &  $\star$ \\ 
\hline
\end{tabular}
}
\end{center}
\end{table}

\begin{table}[h]
\caption{Bi-quadratic algebras on three generators \cite[Theorem 1.9]{Bavula2023}}
\label{SixthtableDSBiquadraticalgebras3}
\begin{center}
\resizebox{12cm}{!}{
\begin{tabular}{ |c|c|c|c| } 
\hline
Conditions & Bi-quadratic algebra & Matrix form (Proposition \ref{Bavula2023Theorem1.3}) & Differential smoothness \\
\hline 
 & $H_1$ & $\mathbb{A}=0_{3 \times 3}$, $\mathbb{B}=0_{3 \times 1}$ & $\checkmark$ \\ \cline{2-4}
\multirow{4}{*}{$q_1 - q_3 \neq 0$,\ $1-q_1q_2=0$\ and $1-q_2q_3\neq 0$ } & $H_2$ & $\mathbb{A}=0_{3 \times 3}$, $\mathbb{B} = \begin{bmatrix}
    0  \\ 0  \\ 1
\end{bmatrix}$ & $\checkmark$ \\ \cline{2-4}
& $H_3$ & $\mathbb{A} = \begin{bmatrix}
    0 & 0 & 0 \\ 0 & 0 & 0 \\ 1 & 0 & 0
\end{bmatrix}$, $\mathbb{B}=0_{3 \times 1}$ & $\star$ \\ \cline{2-4}
& $H_4$ & $\mathbb{A} = \begin{bmatrix}
    0 & 0 & 0 \\ 0 & 0 & 0 \\ 1 & 0 & 0
\end{bmatrix}$, $\mathbb{B} = \begin{bmatrix}
    0  \\ 0  \\ 1
\end{bmatrix}$ & $\star$ \\ \cline{2-4}
\hline
\end{tabular}
}
\end{center}
\end{table}

\begin{table}[h]
\caption{Bi-quadratic algebras on three generators \cite[Theorem 1.10]{Bavula2023}}
\label{SeventhtableDSBiquadraticalgebras3}
\begin{center}
\resizebox{12cm}{!}{
\begin{tabular}{ |c|c|c|c| } 
\hline
Conditions & Bi-quadratic algebra & Matrix form (Proposition \ref{Bavula2023Theorem1.3}) & Differential smoothness \\
\hline
 & $I_1$ & $\mathbb{A}=0_{3 \times 3}$, $\mathbb{B}=0_{3 \times 1}$ & $\checkmark$\\ \cline{2-4}
\multirow{4}{*}{$q_1 - q_3 \neq 0$,\ $1-q_1q_2 \neq 0$\ and $1 - q_2q_3 = 0$ } & $I_2$ & $\mathbb{A}=0_{3 \times 3}$, $\mathbb{B} = \begin{bmatrix}
    1  \\ 0  \\ 0
\end{bmatrix}$ & $\checkmark$ \\ \cline{2-4}
& $I_3$ & $\mathbb{A} = \begin{bmatrix}
    0 & 0 & 1 \\ 0 & 0 & 0 \\ 0 & 0 & 0
\end{bmatrix}$, $\mathbb{B}=0_{3 \times 1}$ & $\star$ \\ \cline{2-4}
& $I_4$ & $\mathbb{A} = \begin{bmatrix}
    0 & 0 & 1 \\ 0 & 0 & 0 \\ 0 & 0 & 0
\end{bmatrix}$, $\mathbb{B} = \begin{bmatrix}
    1  \\ 0  \\ 0
\end{bmatrix}$ & $\star$ \\
\hline
\end{tabular}
}
\end{center}
\end{table}

\begin{table}[h]
\caption{Bi-quadratic algebras on three generators \cite[Theorem 1.11]{Bavula2023}}
\label{EighttableDSBiquadraticalgebras3}
\begin{center}
\resizebox{12cm}{!}{
\begin{tabular}{ |c|c|c|c| } 
\hline
Conditions & Bi-quadratic algebra & Matrix form (Proposition \ref{Bavula2023Theorem1.3}) & Differential smoothness \\
\hline
\multirow{3}{*}{$q_1 - q_3 \neq 0$, \ $1 - q_1q_2 \neq 0$\ and $1 - q_2q_3 \neq 0$} &  &  & \\
& $\mathbb{A}_{Q}^{3}$ & $\mathbb{A}=0_{3 \times 3}$, $\mathbb{B}=0_{3 \times 1}$ & $\checkmark$\\
& &  & \\ 
\hline
\end{tabular}
}
\end{center}
\end{table}

\end{document}